\newcommand{\D}{{\mathop{}\!\mathrm{d}}} 
\newcommand{\R}{\mathbb{R}}
\newcommand{\Q}{\mathbb{Q}}
\newcommand{\N}{\mathbb{N}}
\newcommand{\Pc}{\mathcal{P}}
\newcommand{\EE}{\mathbb{E}}
\newcommand{\PP}{\mathbb{P}}
\newcommand{\E}{\mathbb{E}}
\newcommand{\X}{\mathcal{X}}
\newcommand{\A}{\mathcal{A}}
\newcommand{\one}{ 1 \hspace{-3pt} \mathrm{l}} %
\DeclareMathOperator*{\argmin}{arg\,min}
\DeclareMathOperator*{\Bin}{Bin}
\numberwithin{equation}{section}  
\newtheorem{defn}{Definition}[section]
\newtheorem{exa}[defn]{Example}
\newtheorem{rem}[defn]{Remark}
\newtheorem{thm}[defn]{Theorem}
\newtheorem{prop}[defn]{Proposition}
\newtheorem{cor}[defn]{Corollary}
\newtheorem{lem}[defn]{Lemma}
\newtheorem{asu}[defn]{Assumption}
\title{Q-Learning under Finite Model Uncertainty}
\author[J. Sester, C. Decker, ]{Julian Sester$^{1,*}$, C\'{e}cile Decker$^{1}$}
\begin{document}
\maketitle

\begin{center}
\normalsize{\today} \\ \vspace{0.5cm}
\small\textit{
$^{1}$National University of Singapore, Department of Mathematics,\\ 21 Lower Kent Ridge Road, 119077.}     
\\[2mm]
{$^*$Corresponding author, email:jul{$\_$}ses@nus.edu.sg}
\end{center}

\begin{abstract}~
We propose a robust Q-learning algorithm for Markov decision processes under model uncertainty when each state–action pair is associated with a finite ambiguity set of candidate transition kernels. This finite-measure framework enables highly flexible, user-designed uncertainty models and goes beyond the common KL/Wasserstein-ball formulations. We establish almost sure convergence of the learned Q-function to the robust optimum, and derive non-asymptotic high-probability error bounds that separate stochastic approximation error from transition-kernel estimation error. Finally, we show that Wasserstein-ball and parametric ambiguity sets can be approximated by finite ambiguity sets, allowing our algorithm to be used as a generic solver beyond the finite setting.
\\[0mm]

\noindent
{\bf Keywords:} {Q-Learning, Markov Decision Processes, Model Uncertainty}
\end{abstract}

\section{Introduction}
Markov decision processes (MDPs) are a fundamental framework for sequential decision making under uncertainty, and, in particular, for reinforcement learning. An agent repeatedly observes the state of a stochastic environment, selects an action, and receives a reward, with the goal of maximizing cumulative discounted rewards over an infinite time horizon. Central to this formulation is the specification of the state transition probabilities, which determine how actions influence the evolution of the system.

In practice, these transition probabilities are rarely known and must be estimated from data (\cite{aguirregabiria2002swapping}, \cite{rust1994structural}, \cite{srisuma2012semiparametric}) or implicitly learned through interaction with the environment (see, e.g., \cite{ccalicsir2019model}, \cite{tarbouriech2020active}). As a consequence, classical (non-robust) reinforcement learning methods are highly sensitive to model misspecification: policies trained under an incorrect or outdated transition model may perform poorly when deployed, even if the deviation from the training model is moderate. This issue is particularly acute in non-stationary environments or in applications where data are scarce and multiple competing models are plausible.
A natural way to address this challenge is to incorporate model uncertainty directly into the training phase. In distributionally robust Markov decision problems (\cite{neufeld2023markov}), the transition kernel is assumed to belong to an ambiguity set of probability measures, and the agent optimizes performance under the worst-case model within this set. This paradigm reflects Knightian uncertainty (\cite{knight1921risk}) and provides policies that are robust against adverse but plausible deviations of the true dynamics.

Most existing robust reinforcement learning approaches specify ambiguity sets as metric balls, such as Kullback–Leibler (\cite{bauerle2021distributionally_2}, \cite{liu2022distributionally}) or Wasserstein balls (\cite{neufeld2022robust}), centered at a reference transition kernel. While mathematically convenient, this construction implicitly enforces a neighborhood-based and typically symmetric notion of uncertainty. As a result, it does not allow the decision maker to precisely control which alternative transition models are considered plausible, nor to express asymmetric or scenario-based uncertainty.

In many applications, however, uncertainty is better described by a finite collection of competing models. These may arise from different estimation procedures, stress scenarios, regime-specific dynamics, or expert judgments. In such settings, it is often desirable to protect explicitly against a small number of adverse scenarios rather than against all distributions within a metric neighborhood of a reference model. This motivates the study of robust control problems with finite, user-specified ambiguity sets.

To illustrate the systematic difference, consider an agent repeatedly betting on the outcome of a coin toss game, where the state records recent outcomes and the agent chooses whether to bet on an increase or decrease of the next realization. Suppose the true data-generating process is unknown but believed to belong to a small set of plausible models, such as a fair or a moderately biased coin. A non-robust agent trained under a single reference model may perform well when this model is correct, but can suffer systematic losses under alternatives. Wasserstein-robust approaches mitigate this by protecting against all distributions in a neighborhood of the reference model; however, this typically introduces excessive conservatism by including many unintended intermediate distributions. In contrast, finite ambiguity sets allow the decision maker to specify explicitly the competing scenarios and to optimize against their worst case. As shown in Section~\ref{sec:examples}, this scenario-based robustness yields qualitatively different policies and can outperform both non-robust and Wasserstein-robust strategies when uncertainty is concentrated on a small number of concrete models.

In this paper, we develop a robust $Q$-learning framework for Markov decision problems in which each state–action pair is associated with a finite set of candidate transition kernels. We allow the transition kernel selected by nature to vary over time, as long as it remains within the prescribed ambiguity set, leading to a time-inhomogeneous worst-case formulation. This setting preserves the dynamic programming principle while enabling highly flexible modeling of uncertainty.

\subsection{Our contribution}
Our main contributions can be summarized as follows.
\begin{enumerate}
\item[1.)] We introduce a distributionally robust MDP framework based on finite ambiguity sets of transition probabilities. This approach enables scenario-based robustness and allows the decision maker to specify explicitly which alternative models are deemed plausible, without being restricted to metric balls around a reference measure.
\item[2.)] We propose a novel robust $Q$-learning algorithm for this setting. The algorithm requires only generative access to each candidate transition kernel and does not assume explicit knowledge of the underlying distributions. At each iteration, the learner evaluates actions under the empirically worst-case model and updates the Q-function accordingly.
\item[3.)] We provide a rigorous theoretical analysis of the proposed algorithm. We establish almost sure convergence of the learned Q-function to the robust optimal Q-function and derive non-asymptotic high-probability error bounds. These bounds naturally decompose into a stochastic approximation error and an additional term reflecting the statistical error arising from estimating the candidate transition kernels. In the special case where the kernels are known, this latter term vanishes.
\item[4.)] We show that a broad class of ambiguity sets, including Wasserstein balls and parametric families of transition kernels, can be approximated by sequences of finite ambiguity sets. These results allow the proposed algorithm to serve as a generic numerical solver beyond the finite setting.
\item[5.)] We show tractability of our approach through two examples: one  coin toss example allowing to fully understand the presented approach and how it distinguishes from other related approaches as well as a stock allocation example using real data.
\end{enumerate}

\subsection{Related work}
Distributionally robust optimization and robust Markov decision problems have been studied extensively over the past decade (compare, e.g., \cite{bauerle2021distributionally_2}, \cite{bauerle2021q},
\cite{ElGhaouiNilim2005robust},
\cite{mannor2016robust}, \cite{neufeld2023markov}, \cite{ning2021double}, \cite{panaganti2022sample}, \cite{si2020distributional}, \cite{si2020distributionally},  \cite{uugurlu2018robust}, \cite{wang2022policy}, \cite{wiesemann2013robust}, \cite{xu2010distributionally}, \cite{yang2022towards}, and \cite{zhou2021finite}). This literature has primarily focused on establishing dynamic programming principles under model uncertainty, characterizing robust value functions, and developing tractable solution methods for specific classes of ambiguity sets. A central modelling choice in these works is the specification of uncertainty via ambiguity sets of transition kernels, typically designed to capture deviations from a reference model in a principled manner.

Classical Q-learning for Markov decision problems was introduced by Watkins (\cite{watkins1989learning} \cite{watkins1992q}), and has since become one of the cornerstone algorithms in reinforcement learning. Comprehensive introductions can be found in \cite{clifton2020q} and \cite{jang2019q}, and a wide range of applications has been studied, including control, finance, and operations research (see, e.g., \cite{angiuli2022reinforcement}, \cite{angiuli2021reinforcement}, \cite{cao2021deep}, \cite{charpentier2021reinforcement}, \cite{huang2020deep}, \cite{kolm2020modern}, and \cite{naghibi2006application}). These approaches, however, assume that the underlying transition kernel is fixed and correctly specified, making them sensitive to model misspecification.

Only more recently has attention shifted to robust Q-learning methods that explicitly account for uncertainty in the transition dynamics. Existing approaches predominantly consider ambiguity sets defined as metric balls—most notably Kullback–Leibler balls (\cite{bauerle2021q}, \cite{liu2022distributionally}, and \cite{wang2023finite}) and Wasserstein balls (\cite{neufeld2023markov})—around a reference transition kernel. These formulations exploit convex duality and the structure of the underlying metric to obtain tractable algorithms and theoretical guarantees. At the same time, metric-ball ambiguity sets impose a neighbourhood-based and typically symmetric notion of uncertainty, which implicitly includes many transition models that may not be considered plausible by a decision maker.

In contrast, many practical applications call for scenario-based uncertainty modelling, where ambiguity is naturally described by a finite collection of competing transition models arising from different estimators, stress scenarios, regimes, or expert assessments. Finite ambiguity sets are also particularly natural in financial applications, where uncertainty is often driven by a small number of stress scenarios or regime-dependent models rather than diffuse perturbations of a reference measure, see, e.g. \cite{foglia2018stress}, \cite{kapinos2018stress}, and \cite{schuermann2014stress}.

To the best of our knowledge, robust $Q$-learning methods that allow for arbitrary finite collections of candidate transition kernels, without relying on metric-ball constructions or convexity assumptions, have not been developed. The present work fills this gap by providing an algorithmic and theoretical treatment of robust $Q$-learning under finite model uncertainty, thereby enabling a flexible, user-designed approach to robustness that complements existing metric-based methods.

\subsection{Structure}

The remainder of the paper is organized as follows. Section~\ref{sec:setting} introduces the robust Markov decision problem with finite ambiguity sets and establishes the associated dynamic programming formulation. In Section~\ref{sec:algorithm}, we present the robust $Q$-learning algorithm and prove its almost sure convergence. Section~\ref{sec:extensions} shows how ambiguity sets consisting of infinitely many probability measures can be approximated by finite sets, including Wasserstein and parametric ambiguity models. Section~\ref{sec:continuous_state} outlines an extension to continuous state spaces using function approximation and deep $Q$-learning. Section~\ref{sec:examples} provides numerical experiments illustrating the tractability and qualitative behavior of the proposed approach. All proofs are collected in Section~\ref{sec:proofs}.

\section{Setting and specification of the problem} \label{sec:setting}
In this section we introduce the underlying framework which will be employed to establish a robust $Q$-learning
algorithm for finite ambiguity sets. 

\subsection{Setting} \label{subsec:setting}
Optimal control problems, such as Markov decision problems, are formulated using a state space comprising all possible states accessible by an underlying stochastic process. We model this state space by a  $d$-dimensional finite Euclidean subset 
$\X \subset \mathbb{R}^d$ and eventually aim at solving a robust control problem over an infinite time horizon. Hence, we define the state space over the entire time horizon via
$$\Omega :=  \X^{\mathbb{N}} = \X \times \X \times ... $$
equipped with the  corresponding $\sigma-$algebra
$\mathcal{F}:=2^\X\otimes 2^\X\otimes \cdots$. Next, let $(X_t)_{t\in\mathbb{N}}$ be the state process, i.e., the stochastic process on $\Omega$ describing the states attained over time.
We denote the finite set of possible actions  by $A \subset \mathbb{R}^m$, where $m\in\mathbb{N}$ represents the dimension of the action space. The set of admissible policies is then given by
\begin{align*}
    \mathcal{A} := &\{\textbf{a} = (a_t)_{t\in\mathbb{N}}\ |\ (a_t)_{t\in\mathbb{N}}:\Omega\rightarrow A;\ a_t \text{ is }\ \sigma(X_t)-\text{measurable}\ \text{for all } t \in \mathbb{N}\}\\
    = & \{(a_t(X_t))_{t\in\mathbb{N}}\ |\ a_t: \X \rightarrow A\ \text{Borel\ measurable } \text{for all } t \in \mathbb{N}\}.
\end{align*}
In contrast to classical non-robust Markov decision problems, we work under the paradigm that the precise state transition probability is not known but instead, to account for model uncertainty, contained in an ambiguity set of finitely many transition probabilities. In the following, let $\mathcal{M}_1(\X)$ denote the set of probability measures on $(\X, \mathcal{F})$, {and let $\tau_0$ denote the topology of weak convergence\footnote{
For any $\mu\in \mathcal{M}_1(X)$, $(\mu_n)_{n\in\mathbb{N}}\subseteq \mathcal{M}_1(X)$ and $C_b(X,\mathbb{R})$ denoting the space of continuous and bounded functions mapping from the space $X$ to $\R$, we have:
\begin{equation*}
    \underset{n \rightarrow \infty}{\mu_n\longrightarrow^{\tau_0}\mu} \Leftrightarrow \lim\limits_{n \to \infty}\int gd\mu_n = \int gd\mu,\ \text{ for all } g \in C_b(X,\mathbb{R}).
\end{equation*}}.}
An ambiguity set of $N\in \N$ probability measures in dependence of a state-action pair is then modeled by a set-valued map given by 
\begin{equation}\label{eq:ambiguityset}
\begin{aligned}
    \X \times A & \rightarrow (\mathcal{M}_1(\X)^N,\tau_0)\\
    (x,a) & \twoheadrightarrow \mathcal{P}(x,a):=\left\{\mathbb{P}^{(1)}(x,a),\cdots ,\mathbb{P}^{(N)}(x,a)\right\},
\end{aligned}
\end{equation}
where for all $k \in \{1,\dots,N\}$ and for all $(x,a) \in \X \times A$ we have that $\PP^{(k)}(x,a)$ is a probability measure, i.e., $\PP^{(k)}(x,a)\in \mathcal{M}_1(\X)$.
Note that in the degenerate case where $\mathcal{P}(x, a)$ contains only a single probability distribution, we are facing a classical Markov decision process (i.e.\ a non-robust one), compare, e.g., \cite{bauerle2011markov}.

The ambiguity set of admissible probability distributions on $\Omega$ depends on the initial state $x\in \X$ and the policy $\textbf{a}\in\mathcal{A}$. Let $\delta_x \in \mathcal{M}_1(\X)$ denote the Dirac measure at point $x\in \X$. Then, we define for every state-action pair $(x,\textbf{a})\in \X\times\mathcal{A}$ the underlying set of admissible probability measures of the stochastic process $(X_t)_{t\in\mathbb{N}}$ by
\begin{equation}
\begin{aligned}\label{probaspace}
\mathfrak{P}_{x,\textbf{a}} := \Bigl\{ \delta_{x} \otimes \mathbb{P}_0 \otimes \mathbb{P}_1 \otimes ... ~|~&\text{for all } t \in \mathbb{N} : \mathbb{P}_t : \X  \rightarrow \mathcal{M}_1(\X)\ \text{Borel-measurable},\\ 
&\text{and } \mathbb{P}_t \in \mathcal{P}(x_t,a_t(x_t)),\ \text{ for all } x_t \in \X \Bigl\},
\end{aligned}
\end{equation}
where the notation $\mathbb{P} = \delta_{x} \otimes \mathbb{P}_0 \otimes \mathbb{P}_1 \otimes ... \in \mathfrak{P}_{x,a}$ abbreviates the infinite concatenation of the conditional probability distributions:
$$\mathbb{P}(B) = \sum_{x_0\in \X} ... \sum_{x_t\in \X}  ... \one_B((x_t)_{t\in\mathbb{N}}) ... \mathbb{P}_{t-1}(x_{t-1}; \{x_t\}) ... \mathbb{P}_{0}(x_{0}; \{x_1\}) \delta_{x}(\{x_0\}),\qquad B \in \mathcal{F}.
$$
The construction of ambiguity sets $\mathfrak{P}_{x,\textbf{a}}$ provided in \eqref{eq:ambiguityset} and \eqref{probaspace} is in the related literature often referred to as a \emph{$(s,a)$-rectangular ambiguity set}, (compare \cite{iyengar2005robust} and \cite{wiesemann2013robust}) and enables to obtain a dynamic programming principle of the associated Markov decision problem which allows to derive tractable numerical solution methods, compare Section~\ref{subsec:defop}.

\subsection{Finite ambiguity sets and other ambiguity sets}
Note that the paradigm under which we work in this paper is fundamentally different from the approaches pursued, e.g., in \cite{bauerle2021q}, \cite{liu2022distributionally}, and \cite{neufeld2022robust} where the corresponding ambiguity sets of probability measures are defined as balls around some reference measure. 

The $Q$-learning approaches presented in \cite{bauerle2021q}, \cite{liu2022distributionally}, and \cite{neufeld2022robust} provide solutions to account for a potential misspecification of a reference measure, by allowing deviations from it where the size of the deviations is measured by the respective distances (Kullback--Leibler distance and Wasserstein distance). However, these approaches allow not to control of which type the distributions in the ambiguity set are but instead consider all measures that are in a certain sense close to the reference measure. If a decision maker is instead interested in controlling the distributions contained in the ambiguity set, for example, by allowing only for a specific type of parametric distributions with a finite set of possible parameters or by considering an asymmetric ambiguity set, the decision maker can use the $Q$-learning approach presented in this paper. Another relevant situation is the consideration of multiple estimated models / probability measures obtained for exampled via different estimation methods. Wishing to take into account all of these estimations, a decision maker can construct an ambiguity set containing all estimated probability measures.

 Beyond these use cases, our approach also allows to combine different types of distributions, if desired.

The main difference therefore is that a decision maker using the approach presented in this paper can precisely control what types of distributions are deemed possible whereas the approaches from  \cite{bauerle2021q}, \cite{liu2022distributionally}, and \cite{neufeld2022robust} account for a misspecification of one single reference measure.

\subsection{Specification of the optimization problem} \label{subsec:specopt}

After execution of an action $a_t(X_t)$, the agent receives a feedback on the quality of the chosen action in terms of a reward $r(X_t,a_t(X_t),X_{t+1})$ where $r$ denotes some reward function $r:\X\times A\times \X \rightarrow \mathbb{R}$. The \textit{robust} optimization problem consists then, for every initial state $x \in \X$, in maximizing the expected value of $\sum_{t=0}^{\infty}\alpha^t r(X_t, a_t(X_t), X_{t+1})$ under the worst case measure from $\mathfrak{P}_{x,\textbf{a}}$ over all possible policies $\textbf{a} \in \mathcal{A}$, where  $\alpha\in (0,1)$ is some discount factor accounting for time preferences of rewards.
Therefore, the value function
\begin{equation} \label{eq:valuefunc}
 \X \ni x \mapsto V(x) := \sup_{\mathbf{a} \in \mathcal{A}}\inf_{\mathbb{P}\in \mathfrak{P}_{x,\textbf{a}}} \mathbb{E}_{\mathbb{P}}\left[ \sum^{\infty}_{t=0}\alpha^t \cdot r(X_t,a_t(X_t),X_{t+1})\right]
\end{equation}
describes the expected value of $\sum_{t=0}^{\infty}\alpha^t r(X_t, a_t(X_t), X_{t+1})$ under the worst case measure from $\mathfrak{P}_{x,\textbf{a}}$ and when executing the optimal policy $\textbf{a} \in \mathcal{A}$ after having started in initial state $x\in \X$.

\subsection{Dynamic programming} \label{subsec:defop}
To solve \eqref{eq:valuefunc} directly is typically a non-feasible optimization problem as it means to find directly (infinite-dimensional) solutions over the whole time horizon. However, due to its time-homogeneous structure, solving the infinite time horizon problem can be simplified significantly to a one time step problem which turns out to be tractable. To this end, we consider the one time step optimization problem
\begin{equation}\label{eq:TVdefinition}
\X \ni x \mapsto    \mathcal{T}V(x) := \max_{a \in A}\min_{\mathbb{P}\in\mathcal{P}(x,a)}\mathbb{E}_{\mathbb{P}}\left[ r(x,a,X_1) + \alpha V(X_1)\right],
\end{equation}
where $\X \ni x  \mapsto V(x)$ is the value function defined in \eqref{eq:valuefunc}. By using the above definitions one can show (see Proposition~\ref{prop:DPP}) that the dynamic programming equation $\mathcal{T}V=V$ holds, implying that solving the fixed point equation $\mathcal{T}V=V$ is sufficient to compute the optimal value function. This fixed-point equation is also famously known in its non-robust formulation as \emph{Bellman equation}, compare, e.g. \cite[p.164]{dixit1990optimization} for a non-robust version.

To determine optimal actions, analogous to the definition of the non-robust $Q$-value function (see e.g.\,\cite{watkins1989learning}), we further define the robust optimal $Q$-value function by
\begin{equation}\label{eq:Qvalue}
    \X \times A \ni (x,a)  \mapsto Q^*(x,a) := \min_{\mathbb{P}\in \mathcal{P}(x,a)} \mathbb{E}_{\mathbb{P}}\left[ r(x,a,X_1) + \alpha V(X_1)\right],
\end{equation}
allowing us to interpret $Q^*(x,a)$ as the \emph{quality} of executing action $a$ when in state $x$ (under the worst case measure) with the best possible action leading to $V(x)$ explaining the notion of quality. 
\begin{prop}\label{prop:DPP}
    Assume $0 < \alpha < 1$. Then, for all $x\in \X$ we have
    $$\max_{a\in A}Q^*(x,a)=\mathcal{T}V(x)=V(x).$$
\end{prop}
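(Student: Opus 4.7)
The plan is to establish the two equalities in turn. The identity $\max_{a\in A}Q^*(x,a)=\mathcal{T}V(x)$ is tautological from \eqref{eq:Qvalue}: since $\mathcal{P}(x,a)$ is finite, the infimum defining $Q^*(x,a)$ is attained and matches the inner minimum in $\mathcal{T}V(x)$; since $A$ is finite, the outer $\max$ is attained as well. The substance lies in the robust Bellman equation $V(x)=\mathcal{T}V(x)$, which I propose to prove via finite-horizon approximation rather than by trying to splice a one-step optimal action with $\varepsilon$-optimal continuations state-by-state; such a splice is awkward inside $\mathcal{A}$ because Markov policies cannot condition on which first-step successor $x_1$ was reached.

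Concretely, I would introduce the $n$-step value
\[
V^{(n)}(x) := \sup_{\mathbf{a}\in\mathcal{A}}\inf_{\mathbb{P}\in\mathfrak{P}_{x,\mathbf{a}}}\mathbb{E}_{\mathbb{P}}\!\left[\sum_{t=0}^{n-1}\alpha^{t}\, r(X_t,a_t(X_t),X_{t+1})\right],\qquad V^{(0)}\equiv 0.
\]
Since $\X$ and $A$ are finite, $r$ is automatically bounded (say $|r|\leq R$), so the tail estimate $|V(x)-V^{(n)}(x)|\leq R\alpha^{n}/(1-\alpha)$ yields uniform convergence $V^{(n)}\to V$ on $\X$. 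The core of the argument is a backward induction proving $V^{(n+1)}=\mathcal{T}V^{(n)}$. Using the product form behind \eqref{probaspace}, the cumulative reward decomposes along the first transition as
\[
\mathbb{E}_{\mathbb{P}}\!\left[\sum_{t=0}^{n}\alpha^{t} r(X_t,a_t(X_t),X_{t+1})\right] = \sum_{x_1\in\X}\mathbb{P}_0(x;\{x_1\})\!\left[r(x,a_0(x),x_1)+\alpha\, J^{(n)}\!\bigl(x_1,\mathbf{a}^{(1)},\widetilde{\mathbb{P}}^{x_1}\bigr)\right],
\]
where $\mathbf{a}^{(1)}:=(a_{t+1})_{t\in\mathbb{N}}$, $\widetilde{\mathbb{P}}^{x_1}:=\delta_{x_1}\otimes\mathbb{P}_1\otimes\mathbb{P}_2\otimes\cdots\in\mathfrak{P}_{x_1,\mathbf{a}^{(1)}}$, and $J^{(n)}$ denotes the $n$-horizon objective.

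For the $\leq$ direction, the $(s,a)$-rectangular constraint $\mathbb{P}_t(y)\in\mathcal{P}(y,a_t(y))$ decouples the adversary's choice of $\mathbb{P}_t(y)$ across $y$, so a standard backward induction over $t$ produces a single adversarial profile $(\mathbb{P}_1^{*},\dots,\mathbb{P}_n^{*})$ attaining the inner inf in the shifted problem simultaneously for every $x_1$; bounding the resulting continuation value by $V^{(n)}(x_1)$ and then taking $\sup_{\mathbf{a}}$ leaves only the dependence on $a_0(x)$, giving $V^{(n+1)}(x)\leq \mathcal{T}V^{(n)}(x)$. For the $\geq$ direction I would strengthen the inductive hypothesis to include existence of a \emph{universal} (time-inhomogeneous) Markov policy $\pi^{(n)}\in\mathcal{A}$ achieving $V^{(n)}(x)$ for every $x$; prepending a maximizer $\pi^{(n+1)}_0(x)\in\argmax_{a\in A}\min_{\mathbb{P}\in\mathcal{P}(x,a)}\mathbb{E}_{\mathbb{P}}[r(x,a,X_1)+\alpha V^{(n)}(X_1)]$ to $\pi^{(n)}$ produces a universal optimal $(n+1)$-step policy and yields $V^{(n+1)}(x)\geq\mathcal{T}V^{(n)}(x)$. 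Finally, the contraction estimate $\|\mathcal{T}f-\mathcal{T}g\|_\infty\leq\alpha\|f-g\|_\infty$, a direct consequence of $0<\alpha<1$ and the nonexpansiveness of $\min$ and $\max$ on finite sets, renders $\mathcal{T}$ continuous on $\R^{\X}$, so passing to the limit in $V^{(n+1)}=\mathcal{T}V^{(n)}$ delivers $V=\mathcal{T}V$.

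The main obstacle is the construction, in both directions of the induction, of these \emph{universal} starting-state-independent optimal strategies -- for the adversary in the $\leq$ step and for the controller in the $\geq$ step. Their existence is precisely what allows the $\inf$ (respectively $\sup$) to commute with the weighted sum over $x_1$, and it relies on the $(s,a)$-rectangular structure of $\mathfrak{P}_{x,\mathbf{a}}$ in \eqref{probaspace}, which decouples the admissibility constraints at each time-state pair, together with the finiteness of $A$ and $\mathcal{P}(x,a)$ that ensures the pointwise maximizers and minimizers are attained.
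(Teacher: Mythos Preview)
Your argument is correct but follows a genuinely different route from the paper. The paper does not prove $\mathcal{T}V=V$ from scratch; instead it invokes \cite[Theorem~3.1]{neufeld2023markov} (with $p=0$) and merely verifies that reference's standing assumptions in the present finite setting: that $(x,a)\twoheadrightarrow\mathcal{P}(x,a)$ is nonempty, compact-valued and (upper and lower) hemicontinuous, that a zeroth-moment bound holds with constant $C_P=1$, that $r$ is Lipschitz in $(x,a)$, and that $0<\alpha<1/C_P$. All of these are immediate from the finiteness of $\X$, $A$, and $\mathcal{P}(x,a)$, so the paper's proof is essentially a checklist.

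Your finite-horizon approximation $V^{(n)}\to V$ combined with the one-step recursion $V^{(n+1)}=\mathcal{T}V^{(n)}$ and the contraction of $\mathcal{T}$ is a self-contained, elementary alternative that avoids importing the general machinery of \cite{neufeld2023markov}. The price is that you must argue the recursion carefully, and you have correctly isolated the only nontrivial step: constructing, in each direction, strategies (for the adversary, respectively the controller) that are optimal \emph{simultaneously for every starting state}, which is exactly what the $(s,a)$-rectangularity in \eqref{probaspace} and the finiteness of $A$ and $\mathcal{P}(x,a)$ deliver. The paper's approach is shorter on the page but leans on a result proved elsewhere for far more general state spaces; yours makes the mechanism behind the Bellman equation transparent in this finite setting and would stand alone without external references.
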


Using Proposition~\ref{prop:DPP}, the goal of our $Q$-learning algorithm presented in the next section will be to algorithmically determine $Q^*$ which in turn allows to infer optimal actions in every given state.

\section{Robust \textit{Q}-Learning Algorithm}\label{sec:algorithm}

In this section, we introduce Algorithm~\ref{alg:Q-L}, a novel distributionally robust $Q$-learning algorithm. 
We establish sufficient conditions for its convergence in Theorem~\ref{thmcv} and derive explicit convergence rates in Subsection~\ref{sec:rates}.

\subsection{The robust \textit{Q}-Learning algorithm and its convergence}
We do not assume explicit knowledge of the candidate transition kernels in $\mathcal{P}$.
Instead, we only require \emph{generative access}: for every $(x,a)\in \X \times A$ and each $k \in \{1,\dots,N\}$, the learner can generate independent samples from the transition kernel $\mathbb{P}^{(k)}( x,a)$. Thus, the algorithm only requires access to $N$ independent simulators, a standard assumption in generative-model reinforcement learning.

\begin{algorithm}[h!]
\caption{Robust $Q$-learning for finite ambiguity sets}\label{alg:Q-L}
\begin{algorithmic}

\STATE \textbf{Input:} 
\quad State space $\mathcal{X} \subseteq \mathbb{R}^d$; \quad Initial state $x_0 \in \mathcal{X}$; \quad Action space $A \subseteq \mathbb{R}^m$; \quad Reward function $r$; \quad Discount factor $\alpha \in (0,1)$; \quad Ambiguity set $\mathcal{P}(x,a)=\left\{\mathbb{P}^{(1)}(x,a),\ldots,\mathbb{P}^{(N)}(x,a)\right\}$;
\quad Policy $(a_t)_t \in \mathcal{A}$; \quad Sequence of learning rates $(\widetilde{\gamma_t})_{t \in \mathbb{N}} \subseteq [0,1]$;

\vspace{0.2cm}
\STATE Initialize $Q_0(x,a)$ for all $(x,a) \in \mathcal{X} \times A$ to an arbitrary real value;
\STATE Set $X_0 = x_0$;
\STATE Initialize $\mathbb{P}_0^{(k)}(x, a)(y) = \frac{1}{|\X|}$ for all $(x,a,y) \in \X \times A \times \X$;
\STATE Initialize $N_0(x,a) =0$ for all $(x,a) \in \X \times A$;
\FOR{$t=0,1,\ldots$}

    \STATE Determine an index $k_t^* \in \{1,\ldots,N\}$ by
    \begin{equation}\label{eq_definition_k_t}
    k_t^* \in  \argmin_{k = 1,\ldots, N} \left\{ \sum_{y \in \X} \left(r(X_t, a_t(X_t),y) + \alpha \max_{b \in A} Q_t(y,b)\right)\mathbb{P}_t^{(k)}(X_t, a_t(X_t))(y)\right\};
    \end{equation}
      \STATE Sample $X_{t+1}^{(k)} \sim \mathbb{P}^{(k)}\left(X_t,a_t(X_t)\right)$ independently for $k=1,\dots,N$;
       \STATE Set $X_{t+1} := X_{t+1}^{(k_t^*)}$;
     \STATE Set
    \begin{equation}   \label{eq:sample_pt}
    \mathbb{P}_{t+1}^{(k)}(x, a)(y) := \frac{\sum_{s=0}^{t}\one_{\{(X_s,a_s(X_s),X_{s+1}^{(k)})=(x,a,y)\}}}{\sum_{s=0}^{t} \one_{\{(X_s,a_s(X_s))=(x,a)\}}}     \text{ for all }k\in \{1,\dots,N\},~~ (x,a,y) \in \mathcal{X}\times A \times \X ;
    \end{equation}

    \FORALL{$(x,a) \in \mathcal{X} \times A$}
        \STATE Update $Q_t$ via
        {\small
        \begin{align}\label{eq:updaterule}
        Q_{t+1}(x,a) \leftarrow 
        \begin{cases}
        (1-\widetilde{\gamma}_{N_t(x,a)})\cdot Q_t(x,a) + \widetilde{\gamma}_{N_t(x,a)}\cdot \left(r(x,a,X_{t+1})+\alpha\max\limits_{b \in A} Q_t(X_{t+1},b)\right) & \text{if } (x,a)=\left(X_t, a_t(X_t)\right), \\
        Q_t(x,a); & \text{else};
        \end{cases}
        \end{align}}
        and update the number of visit times $N_t$ via
        \begin{align*}
    N_{t+1}(x,a)\leftarrow \left\{
        \begin{array}{ll}
            N_{t}(x,a) + 1 \ \ &\text{if}\ (x,a)=\left(X_t,~a_t(X_t)\right), \\
            N_{t}(x,a)  \ \ &\text{else};
        \end{array}
    \right.
    \end{align*}
    \ENDFOR
\ENDFOR

\STATE \textbf{Output:} A sequence $(Q_t(x,a))_{t \in \mathbb{N}, x \in \mathcal{X}, a \in A}$;

\end{algorithmic}
\end{algorithm}

Given a policy $\textbf{a} \in \mathcal{A}$ and some initial state $x_0 \in \X$ we further define a probability measure by
\begin{equation}\label{eq_definition_P_measure}
\mathbb{P}_{x_0,\textbf{a}}:=  \delta_{x_0} \otimes \mathbb{P}^{(k_0^*)}(\cdot, a_0(\cdot)) \otimes \mathbb{P}^{(k_1^*)}(\cdot, a_1(\cdot))  \otimes\cdots \in \mathfrak{P}_{x_0,\textbf{a}},
\end{equation}
where $k_t^*$ is defined in \eqref{eq_definition_k_t} and denotes the index of the transition kernel that attains the worst-case expected continuation value (based on the empirical estimate) at time $t$. The measure $\mathbb{P}_{x_0,\mathbf{a}}$ thus characterizes the (unknown) law of the state process generated by Algorithm~\ref{alg:Q-L} under policy $\mathbf{a}$.

Our main result establishes that the sequence $(Q_t)_{t \in \mathbb{N}}$ produced by Algorithm~\ref{alg:Q-L} converges pointwise (in $(x,a)$), $\mathbb{P}_{x_0,\mathbf{a}}$-almost surely, to the optimal robust $Q$-value function $Q^*$ defined in \eqref{eq:Qvalue}.

\begin{thm}\label{thmcv}
Let $(\widetilde{\gamma_t})_{t \in \mathbb{N}} \subseteq [0,1]$, and define for all $(x,a) \in \X \times A  $ and for all $t\in \N$
 \begin{equation}
\gamma_t(x,a,X_t) := \one_{\left\{(x,a)=\left(X_t,~a_t(X_t)\right) \right\}}\cdot \widetilde{\gamma}_{N_t(x,a)}.
\end{equation}
where
\[
N_t(x,a):=\sum_{s=0}^{t}\mathbf{1}_{\{(X_s,a_s(X_s))=(x,a)\}}
\]
is the number of visits to $(x,a)\in \X \times A$ up to time $t$.  Moreover, let $0 < \alpha < 1$ and let $(x_0, \textbf{a}) \in \X \times \mathcal{A}$ such that
    \begin{equation}\label{eq:gammat}
    \sum^{\infty}_{t=0}\gamma_t(x,a,X_t) = \infty,\ \sum^{\infty}_{t=0}\gamma_t^2(x,a,X_t) < \infty,\ \text{ for all } (x,a)\in \X \times A,\ \mathbb{P}_{x_0,\textbf{a}}\ -\text{almost\ surely}.
    \end{equation}
 Then, we have $\text{ for all } (x,a) \in \X\times A$ that
    \begin{equation}
        \lim_{t\rightarrow\infty}Q_t(x,a) = Q^*(x,a)\quad  \mathbb{P}_{x_0,\textbf{a}}\ -\text{almost\ surely}.
    \end{equation}
\end{thm}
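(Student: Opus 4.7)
My plan is to cast Algorithm~\ref{alg:Q-L} as an asynchronous stochastic approximation scheme for a sup-norm contraction on $\R^{\X\times A}$ whose unique fixed point is $Q^*$, and then to invoke a classical almost-sure convergence theorem in the spirit of Jaakkola--Jordan--Singh or Tsitsiklis.

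First, I would introduce the robust Bellman operator
\[
(HQ)(x,a) := \min_{k=1,\ldots,N}\E_{\PP^{(k)}(x,a)}\!\left[r(x,a,X_1) + \alpha \max_{b \in A} Q(X_1,b)\right],\qquad (x,a)\in\X\times A,
\]
and verify that $Q^*$ is its unique fixed point. By Proposition~\ref{prop:DPP} we have $HQ^*=Q^*$, while the elementary inequalities $|\min_k f_k-\min_k g_k|\le \max_k|f_k-g_k|$ and $|\max_b u(b)-\max_b v(b)|\le \max_b|u(b)-v(b)|$, combined with monotonicity of the expectation, give $\|HQ_1 - HQ_2\|_\infty \le \alpha \|Q_1 - Q_2\|_\infty$. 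Since $0<\alpha<1$, $H$ is thus an $\alpha$-contraction on $(\R^{\X\times A},\|\cdot\|_\infty)$, and $Q^*$ is its unique fixed point.

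Second, I would rewrite~\eqref{eq:updaterule} uniformly in $(x,a)\in\X\times A$ as
\[
Q_{t+1}(x,a) = Q_t(x,a) + \gamma_t(x,a,X_t)\bigl[(HQ_t)(x,a) - Q_t(x,a) + \eta_t(x,a)\bigr],
\]
with driving noise
\[
\eta_t(x,a) := r(x,a,X_{t+1}) + \alpha \max_{b\in A} Q_t(X_{t+1},b) - (HQ_t)(x,a).
\]
Writing $\G_t:=\sigma(X_0,\ldots,X_t)$, the key observation is that whenever $\gamma_t(x,a,X_t)\neq 0$ one necessarily has $(x,a)=(X_t,a_t(X_t))$, in which case $X_{t+1}$ is sampled from $\PP^{(k_t^*)}(x,a)$, precisely the measure attaining the minimum in $HQ_t(x,a)$. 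Since $k_t^*$ is a deterministic function of $X_t$ and $Q_t$ and hence $\G_t$-measurable, a direct computation yields $\E_{\PP_{x_0,\mathbf{a}}}\!\left[\gamma_t(x,a,X_t)\,\eta_t(x,a)\mid \G_t\right] = 0$ for every $(x,a)\in\X\times A$. Furthermore, because $r$ is bounded on the finite set $\X\times A\times\X$, a simple induction on~\eqref{eq:updaterule} gives a deterministic bound $\|Q_t\|_\infty \le M:=\max\{\|Q_0\|_\infty,\|r\|_\infty/(1-\alpha)\}$, so $\eta_t$ admits a deterministic conditional variance bound.

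Finally, with the $\alpha$-contractivity of $H$, the Robbins--Monro type step-size assumption~\eqref{eq:gammat} (which in particular forces each $(x,a)\in\X\times A$ to be visited infinitely often $\PP_{x_0,\mathbf{a}}$-almost surely), and the martingale-difference structure of $(\gamma_t\eta_t)_t$ together with its deterministic variance bound in hand, I would apply a standard asynchronous stochastic approximation theorem to obtain $Q_t(x,a)\to Q^*(x,a)$ pointwise, $\PP_{x_0,\mathbf{a}}$-almost surely. The step I expect to require the most care is the identification of the driving noise as a martingale difference: the sampling measure depends on the current iterate $Q_t$ through the $\argmin$ selection $k_t^*$, so $X_{t+1}$ is \emph{not} drawn from a fixed Markov kernel. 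The cornerstone is that this $Q_t$-adapted choice coincides with the measure entering $H$, so that conditioning on $\G_t$ reproduces $HQ_t$ exactly and turns the algorithm into a bona fide noisy fixed-point iteration for a sup-norm contraction.
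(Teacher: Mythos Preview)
Your proposal is correct and follows essentially the same route as the paper: both verify that the robust Bellman operator $\mathcal{H}$ is an $\alpha$-contraction with fixed point $Q^*$, rewrite the update as a noisy fixed-point iteration using the crucial observation that sampling $X_{t+1}$ from $\PP^{(k_t^*)}(X_t,a_t(X_t))$ makes the conditional expectation of the target equal $\mathcal{H}Q_t$, and then invoke a Jaakkola--Jordan--Singh/Singh-style stochastic approximation result (the paper's Lemma~\ref{lem_convergence}). The one place where you deviate is in bounding the conditional variance of the driving noise: the paper applies Popoviciu's inequality (Lemma~\ref{lem_popo}) together with a somewhat lengthy upper/lower bound computation to obtain $\Var(F_t\mid\G_t)\le C(1+\|\Delta_t\|_\infty)^2$, whereas your inductive a-priori bound $\|Q_t\|_\infty\le \max\{\|Q_0\|_\infty,\|r\|_\infty/(1-\alpha)\}$ yields a uniform deterministic variance bound directly---a tidy simplification that still fits the hypothesis of Lemma~\ref{lem_convergence}~(iii).
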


\begin{rem}
If one has explicit knowledge of the distribution $\PP^{(k)}$ (and not only a ready-to-use simulator that can sample from the distribution) for each candidate model $k=1,\dots,N$, then \eqref{eq_definition_k_t} can be replaced by
    \begin{equation}\label{eq_definition_k_t_explicit}
    k_t^* \in  \argmin_{k = 1,\ldots, N} \left\{ \sum_{y \in \X} \left(r(X_t, a_t(X_t),y) + \alpha \max_{b \in A} Q_t(y,b)\right)\mathbb{P}{(k)}(X_t, a_t(X_t))(y)\right\};
    \end{equation}
    and there is no need to learn $\PP_t$ via \eqref{eq:sample_pt}.
\end{rem}

\begin{rem}[On the implementation]~
\begin{itemize}
\item[(a)]
The idea behind  replacing $\widetilde{\gamma}_t$ with $\widetilde{\gamma}_{N_t(x,a)}$ is to account for the exploration-exploitation trade off by reducing the learning rate the more often we have visited a state and hence the more confident the algorithm is w.r.t.\, choosing the correct optimal action, and conversely to apply a larger learning rate if the state-action pair was visited less often until time $t$. This effectively enhances the convergence speed, compare, e.g., \cite{martin2017count}.
\item[(b)]
Note that applying Algorithm~\ref{alg:Q-L} requires to choose a pre-determined policy $\textbf{a}\in \mathcal{A}$. Implied from usual practice in non-robust $Q$-learning ((\cite{dixon2020machine}, Chapter 9), \cite{mnih2015human}, or \cite{tokic2011value}) and robust $Q$-learning \cite{neufeld2022robust}, a reasonable choice seems to be the $ \varepsilon$-greedy policy defined by
\[
\mathcal{X} \ni x \mapsto a_t(x) :=\begin{cases}
\operatorname{argmax}_{b \in A} Q_t(x,b)&\text{with probability } 1- \epsilon_{\operatorname{greedy}}, \\
a \sim \mathcal{U}(A) &\text{with probability } \epsilon_{\operatorname{greedy}} ,
\end{cases}
\]
for some $\epsilon_{\operatorname{greedy}} >0$, and
where $a \sim \mathcal{U}(A)$ means that a random action $a$ is  chosen uniformly at random from the finite set $A$.
\end{itemize}

\end{rem}
\subsection{Convergence rates}\label{sec:rates}
Analogue to results in classical $Q$-learning (\cite{even2003learning}, \cite{szepesvari2005finite}) and in addition to the almost sure convergence guarantee from Theorem~\ref{thmcv}, we provide explicit non-asymptotic high probability convergence rates for the $Q$-learning algorithm presented in Algorithm~\ref{alg:Q-L}.
\begin{thm}
\label{thm:finite_time_rate_alg1}
Let $(\widetilde{\gamma}_t)_{t \in \N}$, and define for all $(x,a) \in \X \times A  $ and for all $t\in \N$
 \begin{equation}
\gamma_t(x,a,X_t) := \one_{\left\{(x,a)=\left(X_t,~a_t(X_t)\right) \right\}}\cdot \widetilde{\gamma}_{N_t(x,a)}
\end{equation}
    Moreover, let $0 < \alpha < 1$ and let $(x_0, \textbf{a}) \in \X \times \mathcal{A}$ such that
    \begin{equation}\label{eq:gammat}
    \sum^{\infty}_{t=0}\gamma_t(x,a,X_t) = \infty,\ \sum^{\infty}_{t=0}\gamma_t^2(x,a,X_t) < \infty,\ \text{ for all } (x,a)\in \X \times A,\ \mathbb{P}_{x_0,\textbf{a}}\ -\text{almost\ surely}.
    \end{equation}
    \begin{itemize}
    \item[(i)]
For any $\delta\in(0,1)$, and any $Q_0 \in \R^{|\X| \times |A|}$ it holds with probability\footnote{Here we mean w.r.t.\,the probability measure $\mathbb{P}_{x_0,\textbf{a}}$.} at least $1-\delta$, for all $T\in\mathbb{N}$,
\begin{align*}
\max_{(x,a) \in \X \times A} |Q_T(x,a)-Q^\ast(x,a)|
\;\le\;
&\left(\|Q_0\|_{\infty} +\frac{C_r}{1-\alpha}\right)\frac{\prod_{i=1}^{N_T^*-1}\left(1-\widetilde{\gamma}_i\right)}{{1-\alpha }}\\
&+
\frac{2C_r}{(1-\alpha)^2}\sqrt{2\log\Big(\frac{2|\X||A|}{\delta}\Big)\sum_{t=1}^{N_T^*} \left(\widetilde{\gamma}_t \prod_{j=t+1}^{N_T^*-1} (1-\widetilde{\gamma}_j)\right)^2}
\\
&+
\frac{C_r}{(1-\alpha)^2} \cdot \sqrt{\frac{2\log\left((2^{|\X|}-2)\cdot\left(\frac{N|\X||A|}{\delta}\right)\right)}{N_T^*}}
\end{align*}
where
$
N_T(x,a):=\sum_{s=0}^{T}\mathbf{1}_{\{(X_s,a_s(X_s))=(x,a)\}}$
is the number of visits to $(x,a)\in \X \times A$ up to time $T$, and 
$N_T^* = \min_{(x,a) \in \X \times A} N_T(x,a)$ denotes the minimal number of visits to all state action pairs until time $T$.
    \item[(ii)] With the choice $\widetilde{\gamma}_t =\frac{1}{t+1}$ for all $t\in \N$, we have
\begin{align*}
\max_{(x,a) \in \X \times A} |Q_T(x,a)-Q^\ast(x,a)|
\;\le\;    &\frac{\left(\|Q_0\|_{\infty} +\frac{C_r}{1-\alpha}\right)}{N_T^*(1-\alpha )}+
\frac{2C_r}{(1-\alpha)^2}\sqrt{\frac{2\log\Big(\frac{2|\X||A|}{\delta}\Big)}{N_T^*}}\\&+
\frac{C_r}{(1-\alpha)^2} \cdot \sqrt{\frac{2\log\left((2^{|\X|}-2)\cdot\left(\frac{N|\X||A|}{\delta}\right)\right)}{N_T^*}}.
\end{align*}
    \end{itemize}
\end{thm}

Note that the error bound in Theorem~\ref{thm:finite_time_rate_alg1} naturally decomposes into three
terms. The first two terms correspond to the stochastic approximation error of the
$Q$--learning recursion. The third term is specific to Algorithm~\ref{alg:Q-L} and reflects the
statistical error arising from estimating the candidate transition kernels
$\PP^{(k)}$ via empirical measures. In the special case where the transition kernels are
known, this third term vanishes.

{
\section{Approximation of other Ambiguity sets with finite ambiguity sets}\label{sec:extensions}

In this section we show that using finite ambiguity sets - and hence our proposed $Q$-learning algorithm - provides a versatile framework for ambiguity modeling, as it cannot only be used to model ambiguity w.r.t.\,a finite pool of candidate models but also to approximate a wide range of
other classes of infinitely large ambiguity sets.

\subsection{Approximation of infinite ambiguity sets} \label{sec:infinite_ambiguity}
Consider some possibly infinitely large ambiguity set of probability measures
\begin{equation*}
\begin{aligned}
    \X \times A \ni (x,a)  \twoheadrightarrow \mathcal{P}(x,a)
\end{aligned} \subseteq \mathcal{M}_1(\X)
\end{equation*}
for which we want to compute its $Q$-value function $Q^*(x,a) = \inf_{\mathbb{P}\in \mathcal{P}(x,a)} \mathbb{E}_{\mathbb{P}}\left[ r(x,a,X_1) + \alpha V(X_1)\right]$. Moreover, we assume that $\mathcal{P}(x,a)$ and the reward function fulfils the following assumption.
\begin{asu}\label{asu_p}~
The set-valued map 
\begin{align*}
\X \times A &\rightarrow\mathcal{M}_1(\X)\\
(x,a) &\twoheadrightarrow  \mathcal{P}(x,a)
\end{align*}
is assumed to be nonempty, compact-valued, and continuous in the weak topology.
\end{asu}
The above assumption ensures that minimizers of the corresponding value function exist (compare \cite[Theorem 2.7]{neufeld2023markov}). Under these assumptions, we can derive the following Lemma~\ref{lem:Q_convergence} that allows to approximate the corresponding $Q$-value function by the $Q$-value function of an approximating sequence of finite ambiguity sets as defined in \eqref{eq:ambiguityset}, justifying in return to apply Algorithm~\ref{alg:Q-L} to compute an approximation of the $Q$-value function associated to $\mathcal{P}(x,a)$.

\begin{lem}\label{lem:Q_convergence}
Assume Assumption~\ref{asu_p} holds true. If there exists a sequence of finite ambiguity sets $\left(\mathcal{P}^{(n)}(x,a)\right)_{n \in \N} \subset \mathcal{P}(x,a)$ such that for all $(x,a) \in \X \times A$ and all $\PP \in \mathcal{P}(x,a)$ there exists a sequence $(\PP^{(n)})_{n \in \N}$  with $\PP^{(n)} \in \mathcal{P}^{(n)}(x,a)$ for all $n\in \N$ such that\footnote{Note that the conditions in \eqref{eq:tau1convergence} are equivalent to convergence in the Wasserstein-$1$-distance, compare, e.g. \cite[Definition 6.8~(i)]{villani2008optimal}, and  Section~\ref{sec:wasserstein}.}
\begin{equation}\label{eq:tau1convergence}
\PP^{(n)} \rightarrow \PP \text{ weakly, and }\int_\X |x| \D \PP^{(n)}(x) \rightarrow \int_\X |x| \D \PP ( x)\text{ as }n \rightarrow \infty,
\end{equation} then we also have
\begin{align*}
\lim_{n \rightarrow \infty} {Q}^{(n)^*}(x,a)&:=\lim_{n \rightarrow \infty} \inf_{\mathbb{P}\in \mathcal{P}^{(n)}(x,a)} \mathbb{E}_{\mathbb{P}}\left[ r(x,a,X_1) + \alpha V^{(n)}(X_1)\right] \\
&= \inf_{\mathbb{P}\in \mathcal{P}(x,a)} \mathbb{E}_{\mathbb{P}}\left[ r(x,a,X_1) + \alpha V(X_1)\right]=:Q^*(x,a).
\end{align*}
where $V^{(n)}$ denotes the value function as defined in \eqref{eq:valuefunc} associated to the finite ambiguity set $\mathcal{P}^{(n)}(x,a)$.
\end{lem}

\subsection{Approximation of Wasserstein ambiguity sets}\label{sec:wasserstein}

Recently, \cite{neufeld2022robust} introduced a $Q$-learning algorithm that incorporates model uncertainty by allowing for distributional ambiguity within a Wasserstein-ball around a reference measure. We demonstrate that this method can be approximated by designing a suitable sequence of finite ambiguity sets, and then applying Algorithm~\ref{alg:Q-L}. Solving the robust $Q$-learning problem for these finite sets then yields an approximation of the optimal $Q$-function corresponding to Wasserstein ambiguity.

For any $q \in \mathbb{N}$ and any probability measures $\PP_1,\PP_2 \in \mathcal{M}_1(\mathcal{X})$, the $q$-Wasserstein distance is defined by  
\[
W_q(\PP_1,\PP_2)
:= \left( \inf_{\pi \in \Pi(\PP_1,\PP_2)} 
       \int_{\mathcal{X}\times \mathcal{X}} \|x-y\|^q \, \mathrm{d}\pi(x,y)
   \right)^{1/q},
\]
where $\|\cdot\|$ denotes the Euclidean norm on $\R^d$, and where $\Pi(\PP_1,\PP_2) \subset \mathcal{M}_1(\mathcal{X}\times\mathcal{X})$ denotes the set of all couplings of $\PP_1$ and $\PP_2$.

Since we work on a finite state space, each measure $\PP_i$ can be represented as  
\[
\PP_i = \sum_{x\in \mathcal{X}} a_{i,x} \delta_x,\qquad
\sum_{x\in\mathcal{X}} a_{i,x} = 1,\quad a_{i,x}\ge0,\quad i=1,2,
\]
where $\delta_x$ denotes the Dirac mass at $x \in \mathcal{X}$.  
In this case, the Wasserstein distance admits the equivalent discrete formulation  
\[
W_q(\PP_1,\PP_2)
= \left( 
      \min_{\pi_{x,y} \in \widetilde{\Pi}(\PP_1,\PP_2)} 
      \sum_{x,y \in \mathcal{X}} \|x-y\|^q\, \pi_{x,y}
   \right)^{1/q},
\]
where  
\[
\widetilde{\Pi}(\PP_1,\PP_2)
:= \left\{
    (\pi_{x,y})_{x,y\in\mathcal{X}} \subseteq [0,1]
    \ \middle| \
    \sum_{x'\in\mathcal{X}} \pi_{x',y} = a_{2,y},\ 
    \sum_{y'\in\mathcal{X}} \pi_{x,y'} = a_{1,x},\
    \forall\, x,y\in\mathcal{X}
   \right\}.
\]

Fix $\varepsilon>0$ and $q\in\mathbb{N}$. For each $(x,a)\in \mathcal{X}\times A$, we define the ambiguity set  
\begin{equation}\label{eq_def_P1_rewritten}
\mathcal{P}^{(q,\varepsilon)}(x,a)
:= \left\{
      \PP\in \mathcal{M}_1(\mathcal{X})
      \,\middle|\,
      W_q(\PP,\widehat{\PP}(x,a)) \le \varepsilon
   \right\},
\end{equation}
i.e., the closed $q$-Wasserstein ball of radius $\varepsilon$ centered at some reference distribution $\widehat{\PP}(x,a) \in \mathcal{M}_1(\X)$. We impose the following  assumption on the reference kernel.
\begin{asu}\label{asu_wasserstein_q}~
The  map 
\begin{align*}
\X \times A &\rightarrow\mathcal{M}_1(\X)\\
(x,a) &\mapsto  \widehat{\PP}(x,a) 
\end{align*}
is assumed to be continuous in the Wasserstein-$q$ topology.
\end{asu}
Thus, for any state--action pair $(x,a)$, the set $\mathcal{P}^{(q,\varepsilon)}(x,a)$ consists of all probability measures on $\mathcal{X}$ that are within Wasserstein distance $\varepsilon$ of the reference measure $\widehat{\PP}(x,a)$. 

\begin{lem}\label{lem:q_wasserstein}
Let Assumption~\ref{asu_wasserstein_q} hold true.
Define for $n\in \N$ the set of probability measures on $\X$ with probabilities defined on an equidistant $1/n$-spaced grid via
\[
G^{(n)}:=\left\{\PP \in \mathcal{M}_1(\X)~\middle|~\PP(y)=\frac{k_y}{n}\text{ for some } k_y \in \{0,1,\dots,n\} \text{ for all } y \in \X \right\},
\]
and define for all $n \in \N$ the ambiguity set
\begin{equation}\label{eq:defn_Pn}
\mathcal{X} \times A \ni (x,a)\twoheadrightarrow \mathcal{P}^{(n)}(x,a):= \mathcal{P}^{(q,\varepsilon)}(x,a) \cap G^{(n)}= \left\{\PP\in G^{(n)}~\middle|~W_q(\PP,\widehat{\PP}(x,a)) \leq  \varepsilon \right\}.
\end{equation}
\begin{itemize}
\item[(i)]
Let $(x,a) \in \X \times A$, and $\PP \in \mathcal{P}^{(q,\varepsilon)}(x,a)$. Then, for all $p \in \N$ and for all $n \in \N$ there exists some $\PP^{(n)} \in G^{(n)}$ with 
\begin{equation}\label{eq:proof_C_n1}
W_p(\PP^{(n)}, \PP) \leq \frac{\max_{y,y' \in \X} \|y-y'\| \cdot |\X|^{1/p} }{n^{1/p}},
\end{equation}
for $|\X|$ denoting the number of states of $\X$.
\item[(ii)]
We define for $(x,a) \in \X \times A$
\begin{align*}
Q^{(n)^*}(x,a)&:= \inf_{\mathbb{P}\in \mathcal{P}^{(n)}(x,a)} \mathbb{E}_{\mathbb{P}}\left[ r(x,a,X_1) + \alpha V^{(n)}(X_1)\right], \\
Q^*(x,a)&:= \inf_{\mathbb{P}\in\mathcal{P}^{(q,\varepsilon)}(x,a)} \mathbb{E}_{\mathbb{P}}\left[ r(x,a,X_1) + \alpha V(X_1)\right],
\end{align*}
where $V^{(n)}$ denotes the value function as defined in \eqref{eq:valuefunc} associated to the finite ambiguity set $\mathcal{P}^{(n)}(x,a)$ defined in \eqref{eq:defn_Pn}. Then, there exists some $n_0$ such that we have for all $ n \geq n_0$
\begin{align*}
\sup_{(x,a) \in \X \times A} |Q^*(x,a)-Q^{(n)^*}(x,a)| \leq \frac{C}{ n^{1/q}},
\end{align*}
for 
\[
C:=\frac{(L_r+\alpha L_V) (\varepsilon+\max_{y,y' \in \X} \|y-y'\| \cdot |\X|^{1/q})}{(1-\alpha)} < \infty,
\]
and where \[
L_r:=\max_{(x,a)\in\X\times\A}\max_{\substack{y,y'\in\X\\y\neq y'}}\frac{|r(x,a,y)-r(x,a,y')|}{\|y-y'\|}<\infty,
\qquad
L_V:=\max_{\substack{y,y'\in\X\\y\neq y'}}\frac{|V^\ast(y)-V^\ast(y')|}{\|y-y'\|}<\infty.
\]
\end{itemize}
\end{lem}
Note that the above explicit approximation rates allow in particular  to determine the number of finite measures needed to obtain an approximation - given a desired precision.

\subsection{Approximation of parametric ambiguity sets}
We now turn to parametric uncertainty, i.e., to the case where one specifies a class of potential conditional transition kernels up to the choice of the model parameter $\theta$ which is subject to ambiguity. We define for each state-action pair $(x,a)\in \mathcal{X}\times A$ the ambiguity set  
\begin{equation}\label{eq_def_P1_parametric}
\mathcal{P}^{\Theta}(x,a)
:= \left\{
      \widehat{\PP}(x,a,\theta)\in \mathcal{M}_1(\mathcal{X})
      \,\middle|\,
      \theta \in \Theta(x,a)
   \right\},
\end{equation}
for some state-action dependent parameter set $\Theta(x,a) \subset \R^{d_\theta}$  and some parameterized probability measure $\widehat{\PP}(x,a,\theta)\in \mathcal{M}_1(\mathcal{X})$. 
\begin{lem}\label{lem:q_parametric}
Let $\Theta(x,a)\subset \R^d $ be compact for all $(x,a) \in \X \times A$. And assume that for all $(x,a) \in \X \times A$ the map $\Theta(x,a) \ni \theta \mapsto \widehat{\PP}(x,a,\theta)$ is continuous in the Wasserstein-1 topology.
Then for all $n\in \N$ there exists a set-valued map 
\[
\X \times A \ni (x,a) \twoheadrightarrow \Theta^{(n)}(x,a)\subset \Theta(x,a)
\]
which contains finitely many elements, 
based on which we define for all $n \in \N$ the ambiguity set
\begin{equation}\label{eq:defn_Pn2}
\mathcal{X} \times A \ni (x,a)\twoheadrightarrow \mathcal{P}^{(n)}(x,a):= \mathcal{P}^{\Theta^{(n)}}(x,a) = \left\{
      \widehat{\PP}(x,a,\theta)\in \mathcal{M}_1(\mathcal{X})
      \,\middle|\,
      \theta \in \Theta^{(n)}(x,a)
   \right\},
\end{equation}
such that the assumptions of Lemma~\ref{lem:Q_convergence} are fulfilled, and we have
\begin{align*}
\lim_{n \rightarrow \infty} Q^{(n)^*}(x,a)&:=\lim_{n \rightarrow \infty} \inf_{\mathbb{P}\in \mathcal{P}^{(n)}(x,a)} \mathbb{E}_{\mathbb{P}}\left[ r(x,a,X_1) + \alpha V^{(n)}(X_1)\right] \\
&= \inf_{\mathbb{P}\in\mathcal{P}^{\Theta}(x,a)} \mathbb{E}_{\mathbb{P}}\left[ r(x,a,X_1) + \alpha V(X_1)\right]=:Q^*(x,a),
\end{align*}
where $V^{(n)}$ denotes the value function as defined in \eqref{eq:valuefunc} associated to the finite ambiguity set $\mathcal{P}^{(n)}(x,a)$ defined in \eqref{eq:defn_Pn2}.
\end{lem}

\begin{exa}\label{exa:binomial_approximation}
Consider the state space $\X = \{1,\dots,N\}$ for some $N \in \N$. Then, for any two functions $\underline{p}:\X \rightarrow \R_+, \overline{p}:\X \rightarrow \R_+$ with $\underline{p}(\cdot) \leq \overline{p}(\cdot)$, the ambiguity set of binomial distributions 
\[
  \X \times A \ni (x,a) \twoheadrightarrow \mathcal{P}(x,a):= \{ \operatorname{Bin}(N,p), p \in [\underline{p}(x), ~\overline{p}(x)]\}
\]
can be approximated in the sense of Lemma~\ref{lem:q_parametric} by
\[
\mathcal{P}^{(n)}(x,a):= \left\{\operatorname{Bin}(N,p), p \in \{p_0,\dots,p_n\}\text{ with } p_i = \underline{p}(x) + \frac{i}{n} \left(\overline{p}(x)-\underline{p}(x)\right),~~ i =0,\dots,n\right\}.
\]
The proof is reported in the appendix. This means, by Lemma~\ref{lem:q_parametric}, to approximate the $Q$-value function associated to ambiguity set $\mathcal{P}(x,a)$, we can compute the $Q$-value function associated to  ambiguity set $\mathcal{P}^{(n)}(x,a)$ using Algorithm~\ref{alg:Q-L} for a sufficiently large $n$.
\end{exa}
Under the additional assumption of Lipschitz-continuity of the parameter $\theta$ in the Wasserstein-distance (see Section~\ref{sec:wasserstein}), we are able to also derive explicit approximation rates allowing in particular to determine the needed granularity of the covering radius of $\Theta^{(n)}$ for a fixed precision.
\begin{cor}\label{cor:parametric}
Let $(x,a)\in\X\times\A$, and assume that $\Theta(x,a)\subset\R^{d_\theta}$ is compact for all $(x,a)$ and that there exists
a constant $L_\theta<\infty$ such that the parametrization is uniformly Lipschitz in $W_1$, i.e.
\begin{equation}
\label{eq:W1-Lipschitz-param}
W_1\!\big(\widehat{\PP}(x,a,\theta),\widehat{\PP}(x,a,\theta')\big)\le L_\theta\|\theta-\theta'\|
\qquad\text{for all }(x,a)\in\X\times\A,\;\theta,\theta'\in\Theta(x,a).
\end{equation}
Let $\Theta^{(n)}(x,a)\subset\Theta(x,a)$ be any finite subset and define the finite ambiguity set
\[
\mathcal{P}^{(n)}(x,a):=\mathcal{P}^{\Theta^{(n)}}(x,a)=\{\widehat{\PP}(x,a,\theta)\,:\,\theta\in\Theta^{(n)}(x,a)\}.
\]
Define the \emph{covering radius}
\begin{equation}
\label{eq:covering-radius}
\varepsilon_n:=\sup_{(x,a)\in\X\times\A}\;\sup_{\theta\in\Theta(x,a)}\;\inf_{\theta'\in\Theta^{(n)}(x,a)}\|\theta-\theta'\|.
\end{equation}

Then there exists $n_0\in\N$ such that for all $n\ge n_0$,
\begin{equation}
\label{eq:parametric-rate-Q}
\sup_{(x,a)\in\X\times\A}\big|Q^\ast(x,a)- Q^{(n)^*}(x,a)\big|
\;\le\;
\frac{(L_r+\alpha L_V)\,L_\theta}{1-\alpha}\;\varepsilon_n
\end{equation}
for 
\[
L_r:=\max_{(x,a)\in\X\times\A}\max_{\substack{y,y'\in\X\\y\neq y'}}\frac{|r(x,a,y)-r(x,a,y')|}{\|y-y'\|}<\infty,
\qquad
L_V:=\max_{\substack{y,y'\in\X\\y\neq y'}}\frac{|V(y)-V(y')|}{\|y-y'\|}<\infty.
\]

\end{cor}



\section{Continuous state spaces}
\label{sec:continuous_state}

In many applications of interest the state space is naturally continuous and the assumption of a finite
state space imposed in Section~\ref{sec:setting} is restrictive. In this section we outline how the
robust $Q$-learning framework from Section~\ref{sec:algorithm} extends to continuous state spaces
via function approximation, in the spirit of the classical deep $Q$-learning literature (\cite{baird1995residual}, \cite{melo2008analysis},  \cite{mnih2015human} and \cite{van2016deep}).

\subsection{Robust Markov decision problems on continuous domains}
\label{subsec:continuous-setting}

Let $\X\subset\R^d$ be a compact state space equipped with its Borel
$\sigma$-algebra, and let $A$ be a finite action set. We consider a bounded reward function
$r:\X\times A\times\X\to\R$ and a discount factor $\alpha\in(0,1)$.
As in Section~\ref{sec:setting}, we assume an $(x,a)$-rectangular ambiguity structure; however, in
contrast to Section~\ref{sec:setting} we now allow $\X$ to be continuous.

For each state-action pair $(x,a)\in\X\times\A$ we are given a finite set of candidate transition
kernels
\begin{equation}
\label{eq:continuous-finite-ambiguity}
\Pc(x,a)=\Big\{\PP^{(1)}(x,a),\dots,\PP^{(N)}(x,a)\Big\}\subset\mathcal M_1(\X).
\end{equation}
We interpret $\Pc(x,a)$ as the set of admissible conditional distributions of the next state $X_1$
given $(X_0,A_0)=(x,a)$ and consider the corresponding robust control objective (over an infinite
time horizon) under the worst-case model chosen from these kernels.

Given a bounded function $Q:\X\times A\to\R$, define the robust Bellman operator
$\mathcal H$ by
\begin{equation}
\label{eq:robust-bellman-operator-cont}
(\mathcal H Q)(x,a)
:=
\min_{k\in\{1,\dots,N\}}
\EE_{\PP^{(k)}(x,a)}
\Big[
r(x,a,X_1)+\alpha\max_{b\in A}Q(X_1,b)
\Big].
\end{equation}
A robust optimal $Q$-function $Q^\ast$ is then characterized (formally) as a fixed point
$Q^\ast=\mathcal H Q^\ast$ and the robust value function satisfies $V^\ast(x)=\max_{a\in A}Q^\ast(x,a)$.

\medskip
\begin{rem}
Under standard boundedness assumptions (see \cite[Chapter 7]{bauerle2011markov}), $\mathcal H$ is a contraction w.r.t.\, $\|\cdot\|_\infty$
with modulus $\alpha$ (the proof is identical to the finite-state case), which motivates iterative
methods based on approximating $\mathcal T$.
\end{rem}
\subsection{Function approximation and a robust deep $Q$-learning objective}
\label{subsec:function-approx}

When $\X$ is continuous, tabular representations of $Q$ are infeasible and we employ a parametric
function class
\[
Q_\theta:\X\times A\to\R,\qquad \theta\in\R^p,
\]
for instance a neural network (\cite{lecun2015deep}, \cite{lu2025distributionally}). The goal is to fit $Q_\theta$ such that it approximately satisfies
the robust Bellman equation \eqref{eq:robust-bellman-operator-cont}.

\medskip
Similarly to Algorithm~\ref{alg:Q-L} (finite-state case), we assume that we can draw
samples from each candidate kernel $\PP^{(k)}(x,a)$, i.e., we have access to $N$ simulators that,
given $(x,a)$, produce independent samples $X_1^{(k)}\sim \PP^{(k)}(x,a)$.

Let $\theta^-$ denote a (possibly delayed) target parameter. For a given $(x,a)$ define the robust
TD target
\begin{equation}
\label{eq:robust-td-target}
Y_{\theta^-}(x,a)
:=
\min_{k\in\{1,\dots,N\}}
\EE_{X_1\sim \PP^{(k)}(x,a)}
\Big[
r(x,a,X_1)+\alpha\max_{b\in\A}Q_{\theta^-}(X_1,b)
\Big].
\end{equation}
In practice, the inner expectation is approximated via Monte-Carlo:
for a chosen $M\in\N$,
\begin{equation}
\label{eq:robust-td-target-mc}
\widehat Y_{\theta^-}(x,a)
:=
\min_{k\in\{1,\dots,N\}}
\frac{1}{M}\sum_{j=1}^M
\Big[
r(x,a,X_{1,j}^{(k)})+\alpha\max_{b\in\A}Q_{\theta^-}(X_{1,j}^{(k)},b)
\Big],
\qquad X_{1,j}^{(k)}\sim P^{(k)}(x,a).
\end{equation}

\medskip
Let $\mu$ be a sampling distribution on $\X\times\A$ (e.g.\ induced by an exploration policy with
experience replay (\cite{foerster2017stabilising})). We propose to minimize the squared Bellman residual:
\begin{equation}
\label{eq:robust-dqn-loss}
{
\ \mathcal L(\theta)
:=
\EE_{(x,a)\sim\mu}
\Big[
\big(
Q_\theta(x,a)-\widehat Y_{\theta^-}(x,a)
\big)^2
\Big].
\ }
\end{equation}
Stochastic gradient descent on \eqref{eq:robust-dqn-loss} yields a robust analogue of deep $Q$-learning (DQN)
for finite ambiguity sets, similar to the approach presented in \cite{lu2025distributionally} for Wasserstein ambiguity set.

\medskip
\begin{rem}[On the implementation]
\begin{itemize}

\item[(a)] As in standard DQN (\cite{mnih2015human}, \cite{van2016deep}), using a slowly updated target parameter $\theta^-$
stabilizes training; the robust modification affects only the target computation through the
$\min_{k=1,\dots,N}$ in \eqref{eq:robust-td-target-mc}.

\item[(b)] One may reduce overestimation bias by replacing the target
$\max_{b\in\A}Q_{\theta^-}(X_{1,j}^{(k)},b)$ with a Double-DQN target, i.e.\ selecting the maximizing
action using the online network and evaluating it using the target network, see \cite{van2016deep}.

\item[(c)]  Relative to non-robust DQN (\cite{mnih2015human}, \cite{van2016deep}), the additional cost is linear in $N$, since for
each training pair $(x,a)$ one must evaluate $N$ Monte-Carlo averages and take their minimum.

\item[(d)]  If transitions are collected from the real environment under a single
unknown kernel, one may still evaluate \eqref{eq:robust-td-target-mc} by resimulating next states
$X_{1,j}^{(k)}$ using the available simulators for each candidate model $k$.
\end{itemize}
\end{rem}

Section~\ref{subsec:function-approx} provides a practical extension of Algorithm~\ref{alg:Q-L}
to continuous state spaces by replacing the tabular update with a parametric approximation and by
minimizing the robust Bellman residual. Establishing global convergence guarantees for neural-network
function approximation is beyond the scope of this paper; nonetheless, the robust modification is
structurally minimal and preserves the key feature of our approach: only sampling access to the
candidate kernels $\PP^{(k)}(x,a)$ is required.

}
\section{Numerical Experiments}\label{sec:examples}
In this section we provide two numerical examples comparing Algorithm~\ref{alg:Q-L} with other robust and non-robust $Q$-learning algorithms.
To apply the numerical method from Algorithm \ref{alg:Q-L}, we use for all
of the following examples 
 a sequence of learning rates defined by $\widetilde{\gamma}_t = \frac{1}{(1+t)^{0.6}}$ for $t\in\mathbb{N}$, a discount factor of $\alpha = 0.95$, as well as an $\epsilon$-greedy policy with $\varepsilon$ decreasing over number of iterations, starting with $1$, and decreasing linearly  to $0.05$. All the reported experiments are being executed with $1~000~000$ iterations on a  11th Gen Intel(R) Core(TM) i7-1165G7 @ 2.80GHz (2.80 GHz) - processor.
Further details of the implementation can be found under \href{https://github.com/juliansester/FiniteQLearning}{https://github.com/juliansester/FiniteQLearning}.

\subsection{Coin toss}\label{subsec:CoinToss}
Inspired by \cite[Example 4.1]{neufeld2023markov}, we consider an agent playing a coin toss game.

At each time step $t \in \mathbb{N}$ the agent observes the result of $10$ coins where an outcome of heads corresponds to $1$, and tails corresponds to  $0$. The state $X_t$ at time $t \in \mathbb{N}$ is then given by the sum of the $10$ coins value, i.e., we have $\X := \{0,\cdots, 10\}$. 

At each time step $t$ the agent can make a bet whether the sum of the next throw strictly exceeds the previous sum (i.e., $X_{t+1} > X_t$), or whether it is strictly smaller (i.e., $X_{t+1} < X_t$). If the agent is correct, she gets $1\$$, in contrast if the agent's bet is wrong she has to pay $1\$$. The agent also has the possibility not to play. We model this by considering the following reward function:
\begin{equation}
  \X \times A \times \X  \ni (x, a, x') \mapsto r(x, a, x') := a\one_{\{x<x'\}}- a\one_{\{x>x'\}}-|a|\one_{\{x=x'\}}, 
\end{equation}
where the possible actions are given by $A := \{-1, 0, 1\}$ with $a = 1$ corresponding to betting $X_{t+1} > X_t$, $a=0$ to not playing, and $a=-1$ to betting $X_{t+1}<X_t$.
In a next step, we define two different ambiguity sets via\footnote{We denote by $\Bin(n,p)$ a binomial distribution with $n$ number of trials and $p$ being the probability of success in a single trial.}
\begin{align}
    \mathcal{P}^1(x,a) := & \{\Bin(10, 0.5),~ \Bin(10, 0.6)\}, \label{P1}\\
    \mathcal{P}^2(x,a) := & \{\Bin(10, 0.5), ~\Bin(10, 0.3)\}, \label{P2}
\end{align}
and we aim at comparing Algorithm~\ref{alg:Q-L} with the Algorithm from \cite{neufeld2023markov} which does not directly allow to build the same asymmetric sets as in \eqref{P1} and \eqref{P2}. To build comparative ambiguity sets with Wasserstein uncertainty we define ambiguity sets such that all probability measures from $\mathcal{P}^1 $ and $\mathcal{P}^2$ are contained, respectively. This leads to\footnote{Here $W_1$ denotes the Wasserstein-$1$ distance, compare for  more details, e.g., \cite{villani2008optimal}.} 
\begin{align}
    \mathcal{P}^3(x,a) := & \left\{ \PP \in \mathcal{M}_1(\X)~\middle|~W_1(\PP, \Bin(10, 0.5)) \leq 1 \right\}, \label{P3}\\
    \mathcal{P}^4(x,a) := & \left\{ \PP \in \mathcal{M}_1(\X)~\middle|~W_1(\PP, \Bin(10, 0.5)) \leq 2 \right\}, \label{P4}
\end{align}
which are Wasserstein-balls centered at the \emph{reference probability} $\Bin(10, 0.5)$ with radii $1$ and $2$, respectively. As one can see, e.g., from \cite[Equation (4.2)] {neufeld2022robust}, $\mathcal{P}^3$ is the smallest Wasserstein-ball  around the reference measure that contains $\Bin(10, 0.6)$, whereas $\mathcal{P}^4$  is the smallest Wasserstein-ball containing $\Bin(10, 0.3)$, but, by construction they contain many additional distributions (e.g. $\Bin(10, p)$ with intermediate $p$). This highlights the unavoidable over-coverage induced by metric balls.

We train optimal actions for $\mathcal{P}^1, \mathcal{P}^2$ by Algorithm~\ref{alg:Q-L}, and optimal actions  for $\mathcal{P}^3, \mathcal{P}^4$ using the Algorithm from \cite{neufeld2022robust}. Upon training, we depict the trained actions in Table~\ref{tbl_trained_actions_exa1}.

\begin{table}[h!]
\begin{centering}
\begin{tabular}{@{}c|ccccccccccc|l@{}}
\toprule
state $X_t$ & 0   & 1   & 2   & 3   & 4   & 5   & 6    & 7    & 8    & 9    & 10   \\ \midrule
$a_t^{\mathcal{P}^1}(X_t)$      & $1$ & $1$ & $1$ & $1$ & $1$ & $0$ & $0$  & $-1$ & $-1$ & $-1$ & $-1$  \\
$a_t^{\mathcal{P}^2}(X_t)$      & $1$ & $1$ & $1$ & $0$ & $0$ & $0$ & $-1$ & $-1$ & $-1$ & $-1$ & $-1$   \\
$a_t^{\mathcal{P}^3}(X_t)$      & $1$ & $1$ & $1$ & $1$ & $0$ & $0$ & $0$  & $-1$ & $-1$ & $-1$ & $-1$   \\
$a_t^{\mathcal{P}^4}(X_t)$      & $1$ & $1$ & $1$ & $0$ & $0$ & $0$ & $0$  & $0$  & $-1$ & $-1$ & $-1$  \\
$a_t^{\rm non-robust}(X_t)$     & $1$ & $1$ & $1$ & $1$ & $1$ & $0$ & $-1$ & $-1$ & $-1$ & $-1$ & $-1$  \\ \bottomrule
\end{tabular}
\caption{The trained actions $a_t^{\mathcal{P}^1}$, $a_t^{\mathcal{P}^2}$, $a_t^{\mathcal{P}^3}$, $a_t^{\mathcal{P}^4}$, and $a_t^{\rm non - robust}$ in dependence of the realized state $X_t$}
\label{tbl_trained_actions_exa1}
\end{centering}
\end{table}

We test the profitability of the actions described in Table~\ref{tbl_trained_actions_exa1}  by playing  $100~000$ rounds of the game according to the trained policies $a_t^{\mathcal{P}^1}$, $a_t^{\mathcal{P}^2}$ ,$a_t^{\mathcal{P}^3}$ and $a_t^{\mathcal{P}^4}$. For simulating the  $100~000$ rounds we assume an underlying binomial distribution $\mathbb{P} = \Bin(10,p_{\rm true})$ with a fixed probability $p_{\rm true}$ for heads which we vary from
$0.1$ to $0.9$. We depict the profits of the considered actions in Table~\ref{tbl_cumulative_reward_exa1} and in Figure~\ref{fig:avg_profit}.

Moreover, in Figure~\ref{fig:convergence_coin_toss} we illustrate the convergence of $\max_{(x,a) \in \X \times A} |Q_t(x,a)-Q^*(x,a)|$ to $0$ as the number of itrations $t$ increases. To this end, we study the two ambiguity sets $\mathcal{P}_1$ and $\mathcal{P}_2$ in the two cases where the transition probabilities are entirely known and are being learned from samples, respectively. We see that in both cases the algorithm converges after a reasonable number of iterations ($\approx 100,000$) up to a small tolerance, while the convergence without known probabilities is slower, accounting for the fact that there is an additional approximation error due to the mismatch between learned probability and true probability, compare also Theorem~\ref{thm:finite_time_rate_alg1}.

With the mentioned computational setup, the compute time for $500,000$ iterations was around $3$ minutes for all of the discussed variants.

\begin{table}[h!]
\begin{centering}
\begin{tabular}{ c|ccccccccccc }
\toprule
\multicolumn{1}{c|}{$p_{\rm true}$} & 0.1 & 0.2 & 0.3 & 0.4 & 0.5 & 0.6 & 0.7 & 0.8 & 0.9  \\ \midrule
Robust, $\mathcal{P}^1$ & $-31240$ & $-19121$ & $-4283$ & $15161$ & ${30484}$ & $\mathbf{29857}$ & ${13212}$ & $-10339$ & ${-29843}$\\
Robust, $\mathcal{P}^2$ & $\mathbf{-24424}$ & $\mathbf{5231}$ & $\mathbf{21051}$ & $\mathbf{24613}$ & ${22998}$ & $11715$ & $-4984$ & $-18896$ & $-31185$ \\
Robust, $\mathcal{P}^3$ & $-30003$ & $-10859$ & $  9843$ & $ 21900$ & $ 25395$ & $ 22587$ & $  9653$ & $-10656$ & $-30301$ \\
Robust, $\mathcal{P}^4$ & $-24528$ & $  4263$ & $ 16771$ & $ 13415$ & $  9925$ & $ 13366$ & $ \mathbf{17345}$ & $ \mathbf{4731}$ & $\mathbf{-24452}$ \\
Non-Robust & $-31101$ & $-18251$ & $-521$ & $23175$ & $\mathbf{35244}$ & $23227$ & $-1387$ & $-18421$ & $-31024$ \\
\bottomrule
\end{tabular}
\caption{Overall profit of the game described in Section~\ref{subsec:CoinToss} in dependence of different trained strategies (described in Table~\ref{tbl_trained_actions_exa1}) and of the probability distribution $P = \Bin(10, p_{\rm true})$ of the simulated underlying process. We indicate with bold characters which is the best performing strategy for each choice of $p_{\rm true}$.}
\label{tbl_cumulative_reward_exa1}
\end{centering}
\end{table}

\begin{figure}[h!]
\begin{centering}
\includegraphics[width = 0.7\linewidth]{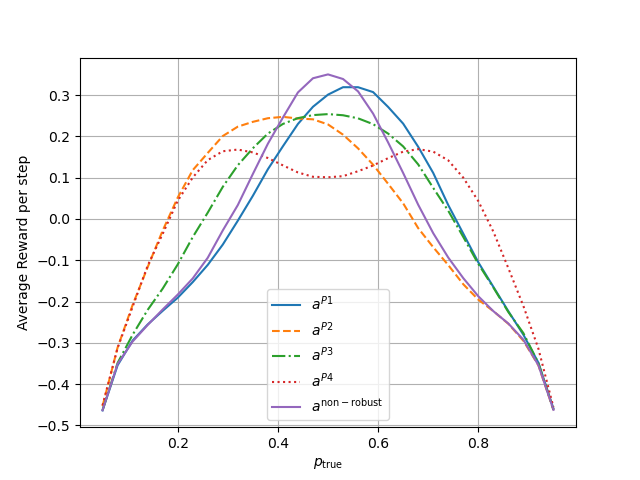}
\caption{Average per step reward of the game described in Section~\ref{subsec:CoinToss} in dependence of different trained strategies (described in Table~\ref{tbl_trained_actions_exa1}) and of the probability distribution $P = \Bin(10, p_{\rm true})$ of the simulated underlying process, where $p_{\rm true}$ is indicated on the $x$-axis.}
\label{fig:avg_profit}
\end{centering}
\end{figure}

\begin{figure}[h!]
\begin{centering}
\includegraphics[width = 1\linewidth]{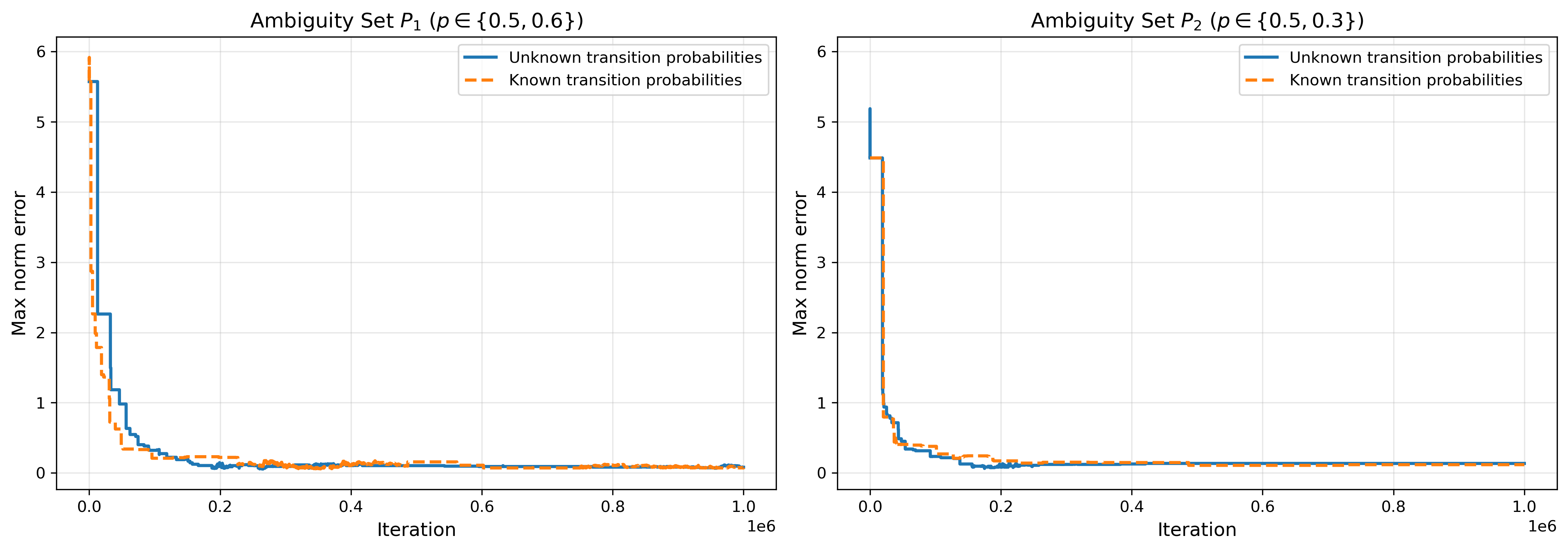}
\caption{The figure shows the convergence of $\max_{(x,a) \in \X \times A} |Q_t(x,a)-Q^*(x,a)|$ for the two ambiguity sets $\mathcal{P}_1$ and $\mathcal{P}_2$ for the cases when the transition probabilities are explicitly learned and are being learned from samples, respectively.}
\label{fig:convergence_coin_toss}
\end{centering}
\end{figure}

For the base case $p_{\rm true} = 0.5$ the non-robust strategy is the best performing strategy. The Wasserstein-strategies are outperformed by the robust strategies taking into account either $\mathcal{P}_1$ or $\mathcal{P}_2$ in all cases for which $p_{\rm true} \leq 0.6$, in particular for the cases $p_{\rm true} = 0.3,0.6$ which are by construction contained in the respective ambiguity sets.

The results imply that the choice of the \emph{optimal} ambiguity sets depends on the scenarios that are deemed possible: If we  know that\footnote{This would correspond to the case that the dealer in a casino does not tell whether she uses a biased coin (for which we know that $p_{\rm true}=0.6$ for playing the game with an agent or if she uses a fair coin.} $p_{\rm true}\in \{0.5,0.6\}$, then one should exactly consider $\mathcal{P}^1$, whereas in that situation the set $\mathcal{P}^3$ that also covers the case  $p_{\rm true} = 0.6$ will lead to a suboptimal because too conservative strategy that also covers the case  $p_{\rm true} = 0.4$ as a possible scenario.

If an applicant however simply has a \emph{best guess} of a reference measure and considers the cases  $p_{\rm true} = 0.4$ and  $p_{\rm true} = 0.6$ both as equally possible, then it is advisable to pursue the Wasserstein-approach outlined in \cite{neufeld2022robust}. To summarize the non-robust strategy dominates exactly at the reference model, the finite-robust strategy dominate exactly on the respective ambiguity set, and the Wasserstein robust strategy dominate only when ambiguity is genuinely diffuse.

Example~\ref{subsec:CoinToss} thus illustrates how the choice of the ambiguity set fundamentally affects the structure of the optimal policy and the resulting performance.
When the ambiguity set is specified as a finite collection of candidate transition kernels, as in $\mathcal P_1$ and $\mathcal P_2$, the robust $Q$-learning algorithm yields policies that are tailored to protect against a small number of explicitly modelled adverse scenarios.
In particular, the resulting strategies exhibit asymmetric betting regions and early abstention in states where the worst-case model renders the expected payoff close to zero.

In contrast, Wasserstein ambiguity sets such as $\mathcal P_3$ and $\mathcal P_4$, which are defined as metric balls around a reference distribution, necessarily include a continuum of intermediate distributions that were not explicitly intended to be protected against.
This leads to more conservative policies, characterized by wider neutral regions in which the agent refrains from betting.
While this conservatism provides robustness against diffuse model misspecification, it may be suboptimal when the modeller has prior knowledge that uncertainty is concentrated on a small number of specific alternatives.

The numerical results confirm this distinction.
When the true data-generating process coincides with one of the distributions contained in the finite ambiguity sets (e.g.\ $p_{\mathrm{true}} \in \{0.5,0.6\}$ for $\mathcal P_1$), the corresponding finite-robust strategy consistently outperforms both the Wasserstein-robust and the non-robust strategies.
Conversely, when uncertainty is genuinely diffuse and symmetric around the reference measure, Wasserstein-based robustness provides a reasonable compromise.

Overall, this example highlights that finite ambiguity sets enable a form of \emph{scenario-based robustness}, whereas Wasserstein balls implement \emph{neighbourhood-based robustness}.
The former is preferable when the modeller can identify a small set of plausible adverse scenarios, while the latter is appropriate when uncertainty is best described by deviations around a reference model without clear directional structure.


\subsection{Portfolio allocation}\label{subsec:StockInvesting}

This example is a variant of \cite[Example 4.3]{neufeld2023markov} showcasing the use of tailored asymmetrical ambiguity sets which are not possible to construct and use in the same manner with Wasserstein or Entropic ambiguity sets as they are considered in \cite{bauerle2021q}, \cite{liu2022distributionally}, and \cite{wang2023finite}.

We consider the problem of optimally investing in the stock market, and, to this end, encode the return of a stock by $2$ numeric values: either the return is positive ($1$) or negative ($-1$).
The space of numerically encoded returns is therefore given by $T:=\{-1, 1\}$.
At each time step $t$ the decision maker can choose to buy the stock or not, or to short sell the stock. The action $a_t$ represent this investment decision, where buying the stock is encoded by $1$ and not buying it by $0$, whereas short-selling corresponds to $-1$, i.e., we have $A:=\{-1,0,1\}$.

The agent's investment decision should depend not only on the most recent return but naturally will depend on  the current \emph{market situation} reflecting autocorrelation of financial time series. Therefore the agent relies her investment decision on the last $h=5$ returns and on the previous position. Hence, we consider the state space
$$\mathcal{X}:=T^h \times A =\{-1, 1\}^h \times \{-1,0,1\}.$$
 The reward is given by the function:
\begin{align*}
 r:\X\times A\times\X &\rightarrow \R\\
\bigg((x_{-h+1},\cdots,x_0,a_0),a_1,(x_{-h+2},\cdots,x_1,a_1)\bigg)  &\mapsto  a_1\cdot x_1-\eta \cdot |a_1-a_0|,
\end{align*}
i.e., we reward a correct investment decision and penalize an incorrect investment decision. Additionally, the term $\eta \cdot |a_1-a_0|$ penalizes position changes, accounting for transaction costs. In our experiments we set $\eta = 0.1$.

Now, to train the agent, we construct market simulators in the following way. We consider the historical evolution of the (numerically encoded by $-1$ and $1$) daily returns of the underlying stock. This time series is denoted by $(R_j )_{j=1,...,N} \subset T^N$ for some $N \in \mathbb{N}$ labeling the length of the time series. Given some $x=(x_{-h+1},\cdots,x_0,a_0) \in \X$ we then sample uniformly the next return from all historical returns sequences with coinciding history\footnote{If no such sequence exist we sample uniformly from $T$.} $(x_{-h+1},\cdots,x_0)$. Then, the state transition of the reference measure is partially deterministic ($h-1$ components of subsequent states coincide) and the next state is of the form 
\[
(x_{-h+2},\cdots,x_0,y,a_1) \in \X
\]
where $y$ is the sampled return and $a_1$ the executed action.

To apply this construction to real data, we consider daily returns of the stocks of \emph{Alphabet, Nvidia, Apple, Microsoft, Amazon,
Broadcom, JP Morgan} (Tickers:  "AAPL", "MSFT", "AMZN", "GOOGL","AVGO","JPM") in the time period from January $2010$ until December $2025$, where we depict the cumulative sum of the signs of the returns in Figure~\ref{fig:train_test}. 
\begin{figure}[h!]
\includegraphics[width=0.9\textwidth]{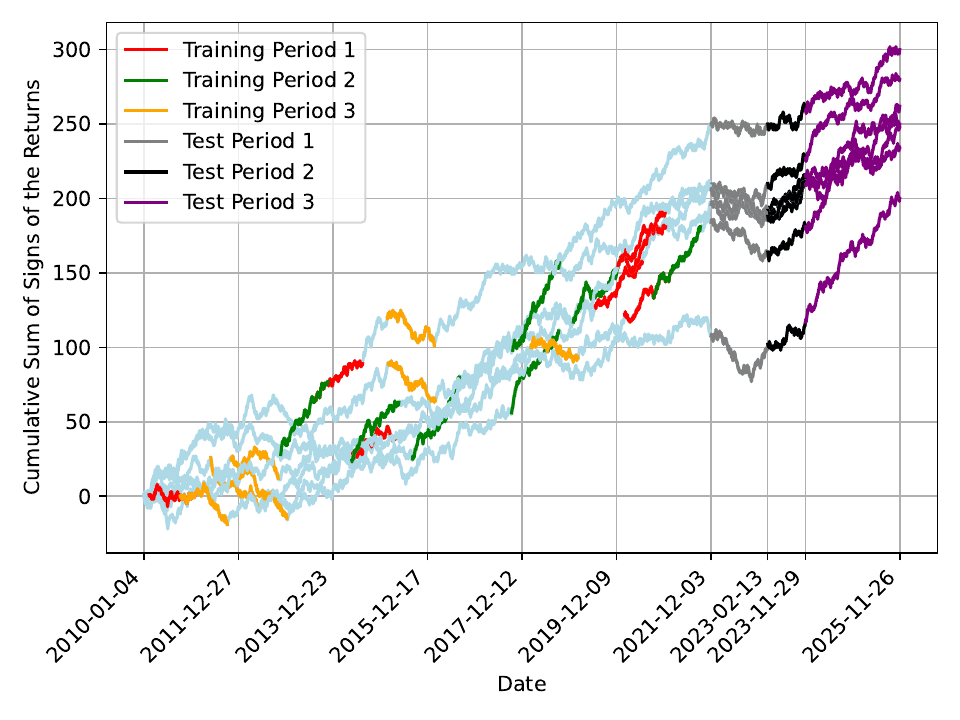}
\caption{Cumulative sum of the sign of the returns and the decomposition of the data in training periods and testing periods.} \label{fig:train_test}
\end{figure}

We split the data into a long training period ranging from January $2010$ until December $2021$ and three testing period starting directly thereafter until February $2023$, January $2024$, and November $2025$, respectively. To construct an ambiguity set $\mathcal{P}$ we isolate specific periods of the training data where the market behaved very volatile (Training Period 1), bearish (Training Period 2) or bullish (Training Period 3). These periods vary from stock to stock and are determined by checking the whole training period for that sub-period of $250$ days with 1) the largest standard deviation, 2) the largest increase, and 3) the largest decrease.  We denote the  probability measures corresponding to these sub-periods by $\PP_1,\PP_2,\PP_3$, respectively. Moreover, we construct a market simulator (with distribution denoted by $\PP_4$) by using all training data. 
 Including the accompanying probability measures $\PP_1,\PP_2,\PP_3, \PP_4$ in an ambiguity set $\mathcal{P}:= \{\PP_1, \PP_2, \PP_3, \PP_4\}$ then accounts for the fear of the agent to not trade profitably in all considered (difficult) market scenarios, and in normal periods ($\PP_4$). The choice of these specific training periods is depicted in Figure~\ref{fig:train_test}. Using Algorithm~\ref{alg:Q-L}, we then train different agents using the the market simulators corresponding to the ambiguity set $\mathcal{P}$ and the market simulators for the non-robust state transitions $\PP_i$, $i=1,2,3,4$, respectively. We emphasize that to use Algorithm~\ref{alg:Q-L} we do not need to know $\PP_1, \PP_2, \PP_3, \PP_4$ explicitly but only need to be able to sample from these measures.
 
As a benchmark for evaluation, we consider a simple buy-and-hold strategy. The results (mean average reward per trade across all tickers) of the different strategies evaluated in the test period are shown in Table~\ref{tbl_cumulative_reward_exa1}, and they show that using a robust strategy indeed enables to avoid losses in all test periods. Note moreover that in the bearish test period 1 the robust strategy in particular outperforms the non-robust agent trained on all data which is likely due to the inclusion of $\PP_3$ in the ambiguity set $\mathcal{P}$ which is the best performing strategies in this period. Also note that in test periods 2 and 3 it becomes visible that the robust approach due to its construction of a worst-case approach is better suited to avoid losses than to profit from bearish market scenarios.

\begin{table}[h!]
\begin{tabular}{@{}llll@{}}
\toprule
                & Test Period 1   & Test Period 2   & Test Period 3  \\ \midrule
Days & 299 & 199 & 500 \\
Negative Returns &1078 &635 & 1573 \\
Positive Returns & 1015 &758 & 1927 \\ 
\midrule  \\
\textbf{Average Rewards} \\
\midrule
Robust  ($\mathcal{P}$)        & 0.0005         & 0.0168 &0.0069\\

Non-Robust: Training Period 1 ($\PP_1$)     &  -0.0460         & 0.0645 &0.0150 \\
Non-Robust: Training Period 2  ($\PP_2$)   & -0.0436 &  0.0733      &0.0901\\
Non-Robust: Training Period 3  ($\PP_3$)   &   {\bf 0.0368}      & -0.0462  &-0.0827\\
Non-Robust: All Data ($\PP_4$)      &  -0.0397         & 0.0469    & 0.0559      \\
Buy and hold    & -0.0392         & \textbf{0.0989} &\textbf{0.1025} \\
\bottomrule
\end{tabular}
\caption{A comparison of the average reward per trade in the three test periods depicted in Figure~\ref{fig:train_test} among different strategies and across all tickers. The best-performing strategy is highlighted with bold letters.}
\label{tbl_cumulative_reward_exa1}
\end{table}

\section{Proofs and auxiliary results}\label{sec:proofs}
\subsection{Auxiliary results}\label{subsec:aux}
We start this section by introducing necessary notation. For any function $f : \X \times A \rightarrow  \R$, we write
\begin{equation}
    \|f \|_{\infty} := \sup_{x\in \X}\sup_{a\in A} |f (x, a)|,
\end{equation}
and we define an estimation of the ambiguity set after $t$ steps of Algorithm~\ref{alg:Q-L} via the set-valued map
\begin{equation}\label{eq:ambiguityset_3}
\begin{aligned}
     \X \times A \ni   (x,a) & \twoheadrightarrow \mathcal{P}_t(x,a):=\left\{\mathbb{P}_t^{(1)}(x,a),\cdots ,\mathbb{P}_t^{(N)}(x,a)\right\} \subset \mathcal{M}_1(\X).
\end{aligned}
\end{equation}
where $\PP_t^{(k)}$ is defined in \eqref{eq:sample_pt}.
Next, we introduce the operators $\mathcal{H}_t$ and $\mathcal{H}$ operating on a function $q:\X \times A \rightarrow \R$ being defined by
\begin{equation}\label{eq:opH}
\X\times A \ni (x,a) \mapsto \mathcal{H}_tq(x,a) := \min_{\mathbb{P}\in\mathcal{P}_t(x,a)} \mathbb{E}_{\mathbb{P}}\left[r(x,a,X_1) + \alpha \max_{b\in A}q(X_1,b)\right]\in \R,
\end{equation}
as well as 
\begin{equation}\label{eq:opH2}
\X\times A \ni (x,a) \mapsto \mathcal{H} q(x,a) := \min_{\mathbb{P}\in\mathcal{P}(x,a)} \mathbb{E}_{\mathbb{P}}\left[r(x,a,X_1) + \alpha \max_{b\in A}q(X_1,b)\right]\in \R.
\end{equation}
Moreover, analogue to \eqref{eq:Qvalue}, we define the optimal $Q$-value function with respect to the ambiguity set $\mathcal{P}_t$ via
\begin{equation}\label{eq:Qtvalue}
    \X \times A \ni (x,a)  \mapsto Q_t^*(x,a) := \min_{\mathbb{P}\in \mathcal{P}_t(x,a)} \mathbb{E}_{\mathbb{P}}\left[ r(x,a,X_1) + \alpha V(X_1)\right],
\end{equation}
where - by slight abuse of notation - $V$ is defined as in \eqref{eq:valuefunc} with using the ambiguity set $\mathcal{P}_t$.
\begin{lem}\label{lem:HQstar}
Let $0 < \alpha<1$, and let the ambiguity set $\mathcal{P}_t$ be defined in \eqref{eq:ambiguityset_3}. Then the following fixed-point equations hold true for the optimal $Q$-value functions defined in \eqref{eq:Qvalue} and \eqref{eq:Qtvalue}, respectively
\begin{align*}
\mathcal{H}_tQ_t^*(x, a) &= Q_t^*(x, a) \text{ for all } (x, a) \in \X \times A, \text{ for all }t \in \N, \\
\mathcal{H}Q^*(x, a) &= Q^*(x, a) \text{ for all } (x, a) \in \X \times A.
\end{align*}

\end{lem}

\begin{proof}
This follows directly by definition of $Q_t^*$ and by Proposition \ref{prop:DPP}, compare also \cite[Proof of Lemma 17]{neufeld2022robust}.
%
\end{proof}

\begin{lem}\label{lem:mapsq}
For any maps $q_i:\X\times A \rightarrow \mathbb{R},\ i=1,2,$ we have: 
\begin{align*}
\|\mathcal{H}_tq_1-\mathcal{H}_tq_2\|_{\infty} &\leq \alpha\|q_1-q_2\|_{\infty} \text{ for all } t \in \N, \\
\|\mathcal{H}q_1-\mathcal{H}q_2\|_{\infty} &\leq \alpha\|q_1-q_2\|_{\infty}.
\end{align*}
\end{lem}

\begin{proof}
The proof follows analogue to  \cite[Proof of Lemma 18]{neufeld2022robust}.
\end{proof}
Next, recall the definition     
\begin{equation*}
    \mathbb{P}_t^{(k)}(x, a)(y) = \frac{\sum_{s=0}^{t}\one_{\{(X_s,a_s(X_s),X_{s+1}^{(k)})=(x,a,y)\}}}{\sum_{s=0}^{t} \one_{\{(X_s,a_s(X_s))=(x,a)\}}}.
    \end{equation*}
\begin{lem}\label{lem:P_t}
Let $(x_0, \textbf{a}) \in \X \times \mathcal{A}$ such that
    \begin{equation}\label{eq:gammat_lem}
    \sum^{\infty}_{t=0}\one_{\{(X_s,a_s(X_s))=(x,a)\}} = \infty, \text{ for all } (x,a)\in \X \times A,\ \mathbb{P}_{x_0,\textbf{a}}\ -\text{almost\ surely}.
    \end{equation}
 Then, we have that 
\[
\mathbb{P}_t^{(k)}(x,a) \rightarrow \mathbb{P}^{(k)}(x,a)  \text{ for all } (x,a)\in \X \times A, k \in \{1,\dots,N\} \qquad \mathbb{P}_{x_0,\textbf{a}}\ -\text{almost\ surely}.
\]
as $t \rightarrow \infty$. 
\end{lem}
\begin{proof}
Let $k \in \{1,\dots,N\}$ and let $(x,a) \in \X \times A$. First note that the condition in \eqref{eq:gammat_lem} ensures that 
\begin{equation}\label{eq:proof_step1}
N_t(x,a):=\sum_{s=0}^{t} \one_{\{(X_s,a_s(X_s))=(x,a)\}} \rightarrow \infty ~~ \mathbb{P}_{x_0,\textbf{a}}\ -\text{almost\ surely as }  t \rightarrow \infty.
\end{equation}
Let $\tau_j(x,a)$ be the time of the $j$-th visit of $\left((X_t,a_t(X_t)\right)_{t\in \N}$ to $(x,a)$. Then, we draw $Y_j^{(k)}:=X_{\tau_j(x,a)+1}^{(k)} \sim \mathbb{P}^{(k)}(x,a)$. By construction $(Y_j^{(k)})_{j \in \N}$ is an i.i.d. sequence of random variables with law $\mathbb{P}^{(k)}(x,a)$. For a fixed next state $y \in \X$, define 
\[
\xi_j^{(y)}:= \one_{\{Y_j^{(k)}=y\}},
\]
and, for $n \in \N$, let $S_n^{(y)}:= \sum_{j=1}^n \xi_j^{(y)}$.
By the strong law of large numbers, we get
\begin{equation}\label{eq:proof_step2}
\frac{S_n^{(y)}}{n} \rightarrow \E\left[\xi_j^{(y)}\right] =\mathbb{P}^{(k)}(x,a)(y)~~\qquad \mathbb{P}_{x_0,\textbf{a}}-\text{almost\ surely as } n \rightarrow \infty.
\end{equation}
Now, note that 
\[
\mathbb{P}_t^{(k)}(x, a)(y)  = \frac{\sum_{s=0}^{t}\one_{\{(X_s,a_s(X_s),X_{s+1}^{(k)})=(x,a,y)\}}}{\sum_{s=0}^{t} \one_{\{(X_s,a_s(X_s))=(x,a)\}}} = \frac{S_{N_t(x,a)}^{(y)}}{{N_t(x,a)}} 
\]
and the assertion follows with \eqref{eq:proof_step1} and \eqref{eq:proof_step2}.
\end{proof}

\begin{lem}\label{lem:Q_t}
Let $0 < \alpha < 1$, and let $(x_0, \textbf{a}) \in \X \times \mathcal{A}$ such that
    \begin{equation}\label{eq:gammat_lem2}
    \sum^{\infty}_{t=0}\one_{\{(X_s,a_s(X_s))=(x,a)\}} = \infty, \text{ for all } (x,a)\in \X \times A,\ \mathbb{P}_{x_0,\textbf{a}}\ -\text{almost\ surely}.
    \end{equation}
 Then, we have that 
 \[
\|Q_t^*-Q^*\|_{\infty} \leq \frac{\|\mathcal{H}_tQ^*-\mathcal{H}Q^*\|_{\infty}}{1-\alpha} \qquad \mathbb{P}_{x_0,\textbf{a}}\ -\text{almost\ surely}.
\]
which implies
\[
{Q}_t^{*}(x,a) \rightarrow Q^{*}(x,a)  \text{ for all } (x,a)\in \X \times A,  \qquad \mathbb{P}_{x_0,\textbf{a}}\ -\text{almost\ surely}.
\]
as $t \rightarrow \infty$. 
\end{lem}
\begin{proof}

We define $C_r:=\max_{(x,a,x')\in \X \times A \times \X} |r(x,a,x')| < \infty$, and then observe that for all $x\in \X$:
\[
V(x) = \sup_{\mathbf{a} \in \mathcal{A}}\inf_{\mathbb{P}\in \mathfrak{P}_{x,\textbf{a}}} \mathbb{E}_{\mathbb{P}}\left[ \sum^{\infty}_{t=0}\alpha^t \cdot r(X_t,a_t(X_t),X_{t+1})\right] \leq  \sum^{\infty}_{t=0}\alpha^t \cdot C_r= \frac{C_r}{1-\alpha}.
\]
By Proposition~\ref{prop:DPP},  for all $(x,a) \in \X \times A$ we then obtain 
$$
Q^*(x,a) \leq \max_{a \in A} Q^*(x,a) =V(x) \leq \frac{C_r}{1-\alpha}
$$
showing that $\|Q^*\|_{\infty} < \infty$.
Next, for all  $(x,a) \in \X \times A$ we compute

\begin{equation}\label{eq:htqminushq}
\begin{aligned}
&|\mathcal{H}_tQ^*(x,a)-\mathcal{H}Q^*(x,a)| \\
&=\left|\min_{\mathbb{P}\in\mathcal{P}_t(x,a)} \mathbb{E}_{\mathbb{P}}\left[r(x,a,X_1) + \alpha \max_{b\in A}Q^*(X_1,b)\right]-\min_{\mathbb{P}\in\mathcal{P}(x,a)} \mathbb{E}_{\mathbb{P}}\left[r(x,a,X_1) + \alpha \max_{b\in A}Q^*(X_1,b)\right]\right| \\
&=\bigg|\min_{k=1,\dots,N} \sum_{y \in \X} \left(r(x,a,y) + \alpha \max_{b\in A}Q^*(y,b)\right)\PP_t^{(k)}(x,a)(y)\\
&\hspace{6cm}-\min_{k=1,\dots,N} \sum_{y \in \X} \left(r(x,a,y) + \alpha \max_{b\in A}Q^*(y,b)\right)\PP^{(k)}(x,a)(y)\bigg|\\
&\leq \max_{k=1,\dots,N} \bigg| \sum_{y \in \X} \left(r(x,a,y) + \alpha \max_{b\in A}Q^*(y,b)\right)\left(\PP_t^{(k)}(x,a)(y)-\PP^{(k)}(x,a)(y)\right)\bigg|\\
&\leq  \left(C_r+ \alpha \|Q^*\|_{\infty}\right)\max_{k=1,\dots,N} \bigg| \sum_{y \in \X}\left(\PP_t^{(k)}(x,a)(y)-\PP^{(k)}(x,a)(y)\right)\bigg| \rightarrow 0 \text{ almost surely, as } t \rightarrow \infty
\end{aligned}
\end{equation}
where the convergence follows by Lemma~\ref{lem:P_t} and since $\|Q^*\|_{\infty} < \infty$.
Next, we get by using Lemma~\ref{lem:HQstar} and Lemma~\ref{lem:mapsq} that
\begin{align*}
\|Q_t^*-Q^*\|_{\infty}&=\|\mathcal{H}_tQ_t^*-\mathcal{H}Q^*\|_{\infty}\\
&=\|\mathcal{H}_tQ_t^*-\mathcal{H}_tQ^*+\mathcal{H}_tQ^*-\mathcal{H}Q^*\|_{\infty}\\
&\leq \alpha \|Q_t^*-Q^*\|_{\infty}+\|\mathcal{H}_tQ^*-\mathcal{H}Q^*\|_{\infty}
\end{align*}
implying 
\[
\|Q_t^*-Q^*\|_{\infty} \leq \frac{\|\mathcal{H}_tQ^*-\mathcal{H}Q^*\|_{\infty}}{1-\alpha}
\]
which by \eqref{eq:htqminushq} converges against $0$ almost surely as $t \rightarrow \infty$.
\end{proof}

The following lemma is a result from
stochastic approximation theory compare also
\cite[Lemma 1]{singh2000convergence}, \cite{dvoretzky1956stochastic} \cite[Theorem 1]{jaakkola1994convergence}, \cite[Lemma 12]{szepesvari1996generalized}, and  \cite[Lemma 3]{van2007convergence} for further reference.
\begin{lem}[\cite{singh2000convergence}, Lemma 1] \label{lem_convergence}
Let $\PP_0 \in \mathcal{M}_1(\Omega)$ be a probability measure on $(\Omega, \mathcal{F})$, and consider a family of stochastic processes $(\gamma_t(x,a),F_t(x,a),\Delta_t(x,a))_{t\in \N_0}$, $(x,a)\in \X \times A$, satisfying for all $t \in \N_0$
\[
\Delta_{t+1}(x,a)=\left(1-\gamma_t(x,a)\right){\Delta}_t(x,a)+\gamma_t(x,a)F_t(x,a) \qquad \PP_0\text{-almost surely for all } (x,a) \in \X \times A.
\]
Let $(\mathcal{G}_t)_{t\in \N_0}\subseteq \mathcal{F}$ be a sequence of increasing $\sigma$-algebras such that for all $(x,a) \in \X \times A$ the random variables $\Delta_0(x,a)$ and $\gamma_0(x,a)$ are $\mathcal{G}_0$-measurable and such that
$\Delta_t(x,a)$, $\gamma_t(x,a)$, and $F_{t-1}(x,a)$ are $\mathcal{G}_t$-measurable for all $t\in \N$.
Further assume that the following conditions hold.
\begin{itemize}
\item[(i)] $0 \leq \gamma_t(x,a ) \leq 1$, $\sum_{t=0}^\infty \gamma_t(x,a )= \infty$, $\sum_{t=0}^\infty \gamma_t^2(x,a) <\infty$ $\PP_0$-almost surely for all $(x,a) \in \X \times A$, $ t\in \N_0$.
\item[(ii)] There exists $\delta \in (0,1)$ such that $\|\E_{\PP_0}\left[F_t (\cdot ,\ \cdot )~\middle|~\mathcal{G}_t\right]\|_\infty\leq \delta \|\Delta_t\|_\infty+c_t$ ~$\PP_0$-almost surely for all $t\in \N_0$, where $c_t$ converges to zero ~$\PP_0$-almost surely.
\item[(iii)] There exists $C>0$ such that $ \left\|\operatorname{Var}_{\PP_0}\left(F_t(\cdot ,\ \cdot  )~\middle|~\mathcal{G}_t\right) \right\|_{\infty}\leq C(1+\|\Delta_t\|_\infty)^2$ $\PP_0$-almost surely for all $ t\in \N_0$.
\end{itemize}
Then, $\lim_{t \rightarrow \infty} \Delta_t(x,a) =0 \qquad \PP_0$-almost surely for all $(x,a) \in \X \times A$.
\end{lem}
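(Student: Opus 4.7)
The approach I would take is the classical decomposition strategy from stochastic approximation theory: split the noise term $F_t$ into a predictable ``drift'' and a martingale-difference ``noise'' component, propagate each through the linear recursion separately, and then combine the two bounds using the Robbins--Monro summability conditions in (i). Concretely, write $F_t(x,a) = E_t(x,a) + w_t(x,a)$ with $E_t(x,a) := \E_{\PP_0}[F_t(x,a) \mid \G_t]$ and $w_t(x,a) := F_t(x,a) - E_t(x,a)$; then $(w_t(x,a))_{t}$ is a martingale-difference sequence whose conditional variance is controlled by assumption (iii), while $E_t$ is controlled by assumption (ii).

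Next, introduce auxiliary sequences $U_t(x,a)$ and $V_t(x,a)$ via the recursions
\[
 U_{t+1}(x,a) = (1-\gamma_t(x,a))\, U_t(x,a) + \gamma_t(x,a)\, E_t(x,a), \qquad U_0(x,a) := \Delta_0(x,a),
\]
\[
 V_{t+1}(x,a) = (1-\gamma_t(x,a))\, V_t(x,a) + \gamma_t(x,a)\, w_t(x,a), \qquad V_0(x,a) := 0,
\]
so that by linearity of the defining recursion one has $\Delta_t = U_t + V_t$ for every $t \in \N_0$. The noise part is handled by a martingale argument: since $\gamma_t(x,a)$ is $\G_t$-measurable and $w_t(x,a)$ is conditionally mean-zero, $V_t(x,a)$ is a zero-mean martingale for each fixed $(x,a)$, and the recursion for its conditional second moment together with $\sum_t \gamma_t^2 < \infty$ and the variance bound (iii) gives $V_t(x,a) \to 0$ almost surely via the Robbins--Siegmund (nonnegative supermartingale convergence) theorem. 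The drift part is handled by a contraction argument: from (ii), $|E_t(x,a)| \leq \delta\, \|\Delta_t\|_\infty \leq \delta\bigl(\|U_t\|_\infty + \|V_t\|_\infty\bigr)$, so taking absolute values in the recursion for $U$, combining $\sum_t \gamma_t(x,a) = \infty$ (which guarantees infinite contraction towards the fixed point $0$) with the already established $\|V_t\|_\infty \to 0$ and the finiteness of $\X \times A$, yields $\|U_t\|_\infty \to 0$ almost surely. Adding the two limits gives the claim.

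The main obstacle is the \emph{feedback loop} built into assumption (iii): the conditional variance of $F_t$ is only bounded in terms of $\|\Delta_t\|_\infty$ itself, so one cannot naively apply martingale convergence to $V_t$ without first knowing that $\|\Delta_t\|_\infty$ is almost surely bounded. This is typically resolved by a stopping-time/truncation argument: for each $M > 0$ consider the stopped process $\Delta_t^{(M)} := \Delta_{t \wedge \tau_M}$ with $\tau_M := \inf\{t : \|\Delta_t\|_\infty > M\}$, run the decomposition argument above to conclude $\Delta_t^{(M)} \to 0$ a.s.\ on $\{\tau_M = \infty\}$, and then show that $\PP_0(\tau_M < \infty) \to 0$ as $M \to \infty$. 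A further subtlety is that conditions (i)--(iii) are given pointwise in $(x,a)$ while the contraction in (ii) involves a supremum, so the pathwise convergence for each fixed $(x,a)$ must be upgraded to a uniform statement; the finiteness of $\X \times A$ makes this step routine once the two limits above are in hand.
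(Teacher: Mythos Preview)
The paper does not actually prove this lemma: it is stated as a known result from stochastic approximation theory, with references to \cite{singh2000convergence}, \cite{jaakkola1994convergence}, \cite{dvoretzky1956stochastic}, \cite{szepesvari1996generalized}, \cite{van2007convergence}, and the formulation adjusted to the present setting in \cite[Lemma~A.1]{neufeld2022robust}. So there is nothing in the paper to compare your argument against; the authors simply invoke the lemma as a black box in the proof of Theorem~\ref{thmcv}.

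Your sketch follows exactly the classical route used in those references (drift--noise decomposition, Robbins--Siegmund for the noise part, contraction for the drift part, truncation to close the feedback loop), so in that sense you are reconstructing the cited proof rather than diverging from it. One small slip worth flagging: $V_t(x,a)$ is \emph{not} a martingale, since
\[
\E_{\PP_0}[V_{t+1}(x,a)\mid\G_t]=(1-\gamma_t(x,a))V_t(x,a),
\]
not $V_t(x,a)$. The right object to analyse is $V_t^2$ (or a suitable Lyapunov function thereof), which satisfies an almost-supermartingale inequality of the form $\E_{\PP_0}[V_{t+1}^2\mid\G_t]\leq (1-\gamma_t)^2 V_t^2+\gamma_t^2\,\operatorname{Var}_{\PP_0}(F_t\mid\G_t)$; this is where Robbins--Siegmund together with $\sum_t\gamma_t^2<\infty$ and assumption~(iii) actually bites. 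Your subsequent handling of the boundedness issue via stopping times $\tau_M$ is the standard device, though in the cited proofs the step ``$\PP_0(\tau_M<\infty)\to 0$'' is usually replaced by an explicit a~priori bound on $\sup_t\|\Delta_t\|_\infty$ obtained from a rescaling argument; your formulation is not wrong, but it hides the same amount of work.
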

The following result is usually referred to as Popoviciu's inequality, see \cite{popoviciu1935equations} or \cite{sharma2010some}.
\begin{lem}\label{lem_popo}
Let $Z$ be a random variable on a probability space $(\Omega, \mathcal{F}, \PP)$ satisfying $m\leq Z \leq M$ for some $-\infty<m\leq M < \infty$. Then, we have $$\operatorname{Var}_{\PP}(Z) \leq \frac{1}{4}\left( M-m\right)^2.$$
\end{lem}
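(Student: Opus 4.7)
The plan is to leverage the variational characterization of the variance together with the two-sided bound $m\le Z\le M$ by centering around the midpoint of the interval rather than around the mean. The key observation is that the midpoint $c^{\ast} := (M+m)/2$ may differ from $\E_{\PP}[Z]$, but the variance is nonetheless dominated by the second moment about $c^{\ast}$, and the two-sided constraint gives a sharp deterministic bound on $|Z-c^{\ast}|$.

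First, I would establish the inequality
\begin{equation*}
\Var_{\PP}(Z) \;\le\; \E_{\PP}\!\left[(Z - c)^2\right] \qquad \text{for every } c \in \R.
\end{equation*}
This is the standard fact that $\E_{\PP}[Z]$ minimizes the mean squared deviation: expand $(Z-c)^2 = (Z-\E_{\PP}[Z])^2 + 2(Z-\E_{\PP}[Z])(\E_{\PP}[Z]-c) + (\E_{\PP}[Z]-c)^2$, note that the cross term vanishes under $\E_{\PP}$, and conclude that $\E_{\PP}[(Z-c)^2] = \Var_{\PP}(Z) + (\E_{\PP}[Z]-c)^2 \ge \Var_{\PP}(Z)$.

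Second, I would specialize to $c = c^{\ast} = (M+m)/2$. The hypothesis $m \le Z \le M$ rewrites as $-(M-m)/2 \le Z - c^{\ast} \le (M-m)/2$, hence $(Z-c^{\ast})^2 \le (M-m)^2/4$ pointwise on $\Omega$. Integrating with respect to $\PP$ yields $\E_{\PP}[(Z-c^{\ast})^2] \le (M-m)^2/4$, and combining with the first step produces the desired bound
\begin{equation*}
\Var_{\PP}(Z) \;\le\; \E_{\PP}\!\left[(Z-c^{\ast})^2\right] \;\le\; \tfrac{1}{4}(M-m)^2.
\end{equation*}

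There is no substantive obstacle in this argument; the only subtlety worth flagging is that one cannot apply the two-sided constraint directly to $(Z-\E_{\PP}[Z])^2$, because $\E_{\PP}[Z]$ need not coincide with the midpoint $c^{\ast}$, so that $|Z-\E_{\PP}[Z]|$ could a priori be as large as $M-m$. Centering at $c^{\ast}$ and then invoking the minimality of the mean squared deviation at the true mean is the natural route to reconcile this, and it delivers the sharp constant $1/4$.
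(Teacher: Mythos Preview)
Your proof is correct and is the standard textbook argument for Popoviciu's inequality. The paper itself does not supply a proof of this lemma; it merely states the result and cites \cite{popoviciu1935equations} and \cite{sharma2010some}, so there is nothing to compare against beyond noting that your argument is exactly the classical one those references contain.
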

To keep the paper self-contained, we further state the so called \emph{Azuma--Hoeffding Inequality}, see, e.g., \cite{mcdiarmid1989method} or \cite{mcdiarmid1998concentration}.
\begin{lem}[\cite{azuma1967weighted}]\label{lem:azu}
Let $(M_t,\mathcal{F}_t)_{t\in \N}$ be a martingale with respect to the filtration
$(\mathcal{F}_t)_{t\in \N}$.  
Assume that the martingale differences are almost surely bounded, i.e.,
there exist constants $c_t > 0$ such that
\[
  |M_t - M_{t-1}| \le c_t \quad \text{a.s. for all } t\in \N
\]
Then for any $\varepsilon > 0$, $n \in \N$ we have
\[
  \mathbb{P}\!\left(
    |M_n - M_0| \ge \varepsilon
  \right)
  \le
  2 \exp\!\left(
    - \frac{\varepsilon^2}{2 \sum_{t=1}^n c_t^2}
  \right).
\]
\end{lem}
Moreover, we make use of the following concentration inequality for empirical distributions.
\begin{lem}[\cite{weissman2003inequalities}, Theorem 2.1]\label{lem:concentration}
Let $\Q$ be a probability distribution on $\X$.
Let $\widehat{\mathbb{\Q}}_n$ denote the empirical distribution of $n$ samples from $\Q$. Then for all $\varepsilon>0$
\[
\PP\left(\|\widehat{\mathbb{\Q}}_n-\Q\|_1\geq \varepsilon\right) \leq \left(2^{|\X|}-2\right) \exp  \left(-\frac{n \varepsilon^2}{2}\right).
\]
\end{lem}

\subsection{Proofs of the results from Section~\ref{sec:setting}, Section~\ref{sec:algorithm}, and {Section~\ref{sec:extensions}}}\label{subsec:proofs}
In this section we provide the proofs of Section \ref{subsec:defop} and Section \ref{sec:algorithm}.

\begin{proof}[Proof of Proposition~\ref{prop:DPP}]
Note that the equality $\max_{a\in A}Q^*(x,a)=\mathcal{T}V(x)$ directly follows by definition of the operator $\mathcal{T}$, and it only remains to show $\mathcal{T}V = V$. To this end, we aim at applying \cite[Theorem 3.1]{neufeld2023markov} with $p=0$ and verify its assumptions, i.e., we verify \cite[Assumption 2.2]{neufeld2023markov} and \cite[Assumption 2.4]{neufeld2023markov} by showing the following four properties.
\begin{itemize}
\item[(i)] The set-valued map \begin{equation*}
\begin{aligned}
    \X \times A & \rightarrow (\mathcal{M}_1(\X)^N,\tau_0)\\
    (x,a) & \twoheadrightarrow \mathcal{P}(x,a):=\left\{\mathbb{P}^{(1)}(x,a),\cdots ,\mathbb{P}^{(N)}(x,a)\right\}
\end{aligned}
\end{equation*}
is nonempty, compact-valued and continuous\footnote{Continuity of a set-valued map means the map is both upper hemicontinuous and lower hemicontinuous.}.
\item[(ii)] There exists $C_P \geq 1$ such that for all $(x,a) \in \X\times A, \mathbb{P}\in\mathcal{P}(x,a)$ we have
    \begin{equation}\label{existCp}
     \int_X (1+\|y\|^p)\mathbb{P}(dy) \le C_P(1+\|x\|^p)
    \end{equation}
    \item[(iii)] The reward function $r:\X\times A\times \X \rightarrow \mathbb{R}$ is Lipschitz-continuous in its first two components.
    \item[(iv)] We have $0 < \alpha < \frac{1}{C_P}$, where $C_P$ is defined in (ii).
\end{itemize}

To see that (i) holds, we note that $\mathcal{P}$ is by definition non-empty. It is also compact for all $(x,a) \in \X \times A$ since the set $\mathcal{P}(x,a)$ is finite. 
To show the upper hemicontinuity, we consider some $(x,a) \in \mathcal{X}\times A$ and we let $(x_n,a_n,\PP_n)_{n \in \N} \subseteq \operatorname{Gr}(\mathcal{P}):= \{(x,a,\PP)~|~(x,a) \in \mathcal{X}\times A, \PP \in \mathcal{P}(x,a)\}$ be a sequence such that $(x_n,a_n) \rightarrow (x,a) \in \mathcal{X}\times A $ as $n \rightarrow \infty$. Since both $\mathcal{X}$ and $A$ are finite, we then have $(x_n,a_n)=(x,a)$ for $n$ large enough. Hence, $\PP_n \in \mathcal{P}(x,a)$ for $n$ large enough. As $\mathcal{P}(x,a)$ is finite we also have that $\PP_n = \PP^{(i)}(x,a)$ for infinitely many indices $n$ for some $i \in \{1,\dots,N\}$. Hence, we can find a subsequence $(\PP_{n_k})_{k\in \N}$ such that $\PP_{n_k} \rightarrow \PP^{(i)}(x,a) \in \mathcal{P}(x,a)$ weakly as $k \rightarrow \infty$ and the upper hemicontinuity of $\mathcal{P}$ follows by the characterization provided in \cite[Theorem 17.20]{Aliprantis}.

To show the lower hemicontinuity let $(x,a) \in \X \times A$, $\PP\in \mathcal{P}(x,a)$ and consider a sequence $(x_n,a_n)_{n \in \N} \subseteq \X \times A$ with $(x_n,a_n) \rightarrow (x,a)$ as $n \rightarrow \infty$. By definition of $\mathcal{P}$, there exists some $i_{\PP} \in \{1,\dots,N\}$ such that $\PP^{(i_{\PP})}(x,a) = \PP$. We use this observation to define 
\[
\PP_n:= \PP^{(i_{\PP})}(x_n,a_n) \in \mathcal{P}(x_n,a_n),~n \in \N,
\] and we obtain, since $(x_n,a_n)= (x,a)$ for $n$ large enough that
$$
\PP_n=\PP^{(i_{\PP})}(x_n,a_n)= \PP^{(i_{\PP})}(x,a) = \PP
$$  for $n$ large enough that
and hence  $\PP_n \rightarrow \PP$ weakly which shows
the lower hemicontinuity of $\mathcal{P}$ according to \cite[Theorem 17.21]{Aliprantis}.

To show (ii), note that $p=0$, and define $C_P: = 1$, then we have
\[
        \int_\X (1+\|y\|^0)\mathbb{P}(dy)  = 2 = C_P \cdot(1+\|x\|^0)
\]
for all $(x,a) \in \X \times A$.
Next, (iii) follows since  for all $x_0,x_0',x_1 \in \mathcal{X}$ and $a,a' \in A$ we have
\[
|r(x_0,a,x_1)-r(x_0',a',x_1)|\leq \left(\max_{y_0,y_0'\in \mathcal{X},~b,b'\in A \atop (y_0,b) \neq (y_0',b')} \frac{|r(y_0,b,x_1)-r(y_0',b',x_1)|}{\|y_0-y_0'\|+\|b-b'\|}\right) \cdot \left(\|x_0-x_0'\|+\|a-a'\|\right).
\]
Finally, (iv) follows since $C_P=1$ and as we have $0< \alpha<1$ by assumption.
\end{proof}

\begin{proof}[Proof of Theorem~\ref{thmcv}]
To establish convergence of the $Q$-learning algorithm from Algorithm~\ref{alg:Q-L}, we apply Lemma~\ref{lem_convergence}, and to verify the assumptions of Lemma~\ref{lem_convergence} we first note that for each $t\in \mathbb{N}$ and $(x,a) \in \X\times A$  the update rule from Algorithm~\ref{alg:Q-L}, expressed in \eqref{eq:updaterule}, can be written equivalently as 
    \begin{align}
        Q_{t+1}(x,a) = Q_t(x,a) + \gamma_t(x,a,X_t)\cdot \Big(r(x,a,X_{t+1})+\alpha\max_{b\in A}Q_t(X_{t+1},b) - Q_t(x,a)\Big);
    \end{align}
    where for $(x,a) \in \mathcal{X}\times A$ we recall the definition
    \begin{equation}
\gamma_t(x,a,X_t) := \one_{\left\{(x,a)=\left(X_t,~a_t(X_t)\right) \right\}}\cdot \widetilde{\gamma}_{N_t(x,a)}.
\end{equation}
Next, for each $t\in \mathbb{N}$ and $(x,a) \in \X\times A$ we define
\begin{align*}
\Delta_t(x,a): &= Q_t(x,a) - Q_t^*(x,a), \\
    F_t(x,a): &= \left(r(x,a,X_{t+1}) + \alpha \max_{b\in A} Q_t(X_{t+1},b)- Q_t^*(x,a)\right)\one_{\left\{(x,a)=\left(X_t,~a_t(X_t)\right) \right\}}.
\end{align*}
Moreover, we recall that as defined in Algorithm~\ref{alg:Q-L} we have
$$
        k_t^* \in  \argmin_{k = 1,..., N} \mathbb{E}_{\mathbb{P}_t^{(k)}(X_t, a_t(X_t))}\left[r(X_t, a_t(X_t),X_{t+1}) + \alpha \max_{b\in A} Q_t(X_{t+1},b)\right].
$$
Once the three assumptions of Lemma~\ref{lem_convergence} are verified, it follows that $\Delta_t(x,a) \rightarrow 0 $  $\mathbb{P}_{x_0,\textbf{a}}$-almost\ surely as $t \rightarrow \infty$ and hence with Lemma~\ref{lem:Q_t}
\[
\|Q_t-Q^*\|_{\infty} \leq \|Q_t-Q_t^*\|_{\infty} +\|Q_t^*-Q^*\|_{\infty} =\|\Delta_t\|_{\infty} +\|Q_t^*-Q^*\|_{\infty} \rightarrow  \quad \mathbb{P}_{x_0,\textbf{a}}\text{-almost\ surely as }t \rightarrow \infty
\]
and thus
\[
Q_t(x,a) \rightarrow Q^*(x,a) \quad \mathbb{P}_{x_0,\textbf{a}}\text{-almost\ surely as }t \rightarrow \infty
\]
implying the assertion. Thus, to apply Lemma~\ref{lem_convergence},
we compute for all  $(x,a) \in \X\times A$ that
\begin{align*}
    \Delta_{t+1}(x,a)        
    = & Q_t(x,a) - Q_t^*(x,a) + \gamma_t(x,a,X_t)\cdot \Big(r(x,a,X_{t+1}) + \alpha \max_{b\in A} Q_t(X_{t+1},b) - Q_t(x,a)\Big)\\
                      = & \Delta_t(x,a) \\
                       &+ \gamma_t(x,a,X_t)\cdot \Big(r(x,a,X_{t+1}) + \alpha \max_{b\in A} Q_t(X_{t+1},b)- Q_t(x,a) + Q_t^*(x,a) - Q_t^*(x,a)\Big)\\
                      = & (1 - \gamma_t(x,a,X_t))\cdot\Delta_t(x,a) \\
                      &+ \gamma_t(x,a,X_t)\cdot \Big(r(x,a,X_{t+1}) + \alpha \max_{b\in A} Q_t(X_{t+1},b)- Q_t^*(x,a)\Big)\\
= & (1 - \gamma_t(x,a,X_t))\cdot\Delta_t(x,a) + \gamma_t(x,a,X_t)\cdot F_t(x,a),
\end{align*}
where we used in the last step that the indicator $\one_{\{(x,a)=(X_t,~a_t(X_t)) \}}$ appears both in the definition of $F_t$ and $\gamma_t$ and that the square of the indicator function is the indicator function itself.
Consider the filtration $(\mathcal{G}_t)_{t\in\mathbb{N}}$ with
$$\mathcal{G}_t := \sigma(X_1^{(1)},X_2^{(1)},..., X_t^{(N)}),\ t\in\mathbb{N},$$
and $\mathcal{G}_0 := \{\varnothing, \Omega^N\}$ being the trivial sigma-algebra. Then, by definition we have for all $ t \in \mathbb{N}$ and for all $(x,a) \in \X \times A$ that the random variables $\Delta_t(x,a),\ \gamma_t(x,a),\ F_{t-1}(x,a)$ are $\mathcal{G}_t$-measurable.

We now define  the random operator 
\begin{equation}\label{eq:opHtilde}
\X\times A \ni (x,a) \mapsto \widetilde{\mathcal{H}}_tq(x,a) :=  \mathbb{E}_{\mathbb{P}^{(k_t^*)}(x,a)}\left[r(x,a,X_1) + \alpha \max_{b\in A}q(X_1,b)\right],
\end{equation}
where $k_t^*$ is the index selected by the empirical minimization in Algorithm~\ref{alg:Q-L}; expectations are taken under the corresponding true kernel $\PP^{(k_t^*)}$ to quantify the kernel estimation error between $\PP_t^{(k_t^*)}$ and $\PP^{(k_t^*)}$.
Further, we define $C_r := \max_{y_0,y_1 \in \X,\ b\in A} |r(y_0,b,y_1)|<\infty$ and then, analogous to the proof of Lemma~\ref{lem:Q_t}, we obtain  $\|Q_t\|_{\infty} < \frac{C_r}{1-\alpha}<\infty$. 
Next, let $(x,a)\in \X\times A,$ and $t \in \mathbb{N}$, and note that 
\begin{align*}
&\left| \widetilde{\mathcal{H}}_tQ_t(x,a) - {\mathcal{H}}_tQ_t(x,a)\right|\\ 
&=\left|\mathbb{E}_{\PP^{(k_t^*)}(x,a)}\left[r(x,a,X_{t+1}) + \alpha \max_{b\in A} Q_t(X_{t+1},b) \right] -\mathbb{E}_{\PP_t^{(k_t^*)}(x,a)}\left[r(x,a,X_{t+1}) + \alpha \max_{b\in A} Q_t(X_{t+1},b) \right]\right|\\
&\leq   \max_{k=1,\dots,N} \left|\mathbb{E}_{\PP^{(k)}(x,a)}\left[r(x,a,X_{t+1}) + \alpha \max_{b\in A} Q_t(X_{t+1},b) \right] -\mathbb{E}_{\PP_t^{(k)}(x,a)}\left[r(x,a,X_{t+1}) + \alpha \max_{b\in A} Q_t(X_{t+1},b) \right]\right|\\
&\leq  \left(C_r+ \alpha \|Q_t\|_{\infty}\right) \max_{(x,a) \in \X \times A}\max_{k=1,\dots,N}  \sum_{y \in \X}\left|\PP_t^{(k)}(x,a)(y)-\PP^{(k)}(x,a)(y)\right|=:c_t
\end{align*}
with $c_t\rightarrow 0 $ $\mathbb{P}_{x_0,\textbf{a}}\text{-almost\ surely as }t \rightarrow \infty$ by Lemma~\ref{lem:P_t} and since $\|Q_t\|_{\infty}< \infty$. Then, we have  by the definition of $\mathbb{P}_{x_0,\textbf{a}}$ in \eqref{eq_definition_P_measure} $\mathbb{P}_{x_0,\textbf{a}}$-almost\ surely that
\begin{align*}
&|\mathbb{E}_{\mathbb{P}_{x_0,\textbf{a}}}[F_t(x,a)|\mathcal{G}_t]| \\
&= \one_{\{(x,a)=(X_t,~a_t(X_t)) \}}\left| \mathbb{E}_{\mathbb{P}_{x_0,\textbf{a}}}\left[r(x,a,X_{t+1}) + \alpha \max_{b\in A} Q_t(X_{t+1},b) - Q_t^*(x,a)~\middle|~\mathcal{G}_t\right]\right|\\
&= \one_{\{(x,a)=(X_t,~a_t(X_t)) \}}\left| \mathbb{E}_{\mathbb{P}^{(k_t^*)}(x,a)}\left[r(x,a,X_{t+1}) + \alpha \max_{b\in A} Q_t(X_{t+1},b) -  Q_t^*(x,a)\right]\right|\\
&= \one_{\{(x,a)=(X_t,~a_t(X_t)) \}}\left| \widetilde{\mathcal{H}}_tQ_t(x,a) - Q_t^*(x,a)\right| \\
&= \one_{\{(x,a)=(X_t,~a_t(X_t)) \}}\left| \mathcal{H}_tQ_t(x,a)-Q_t^*(x,a)+\widetilde{\mathcal{H}}_tQ_t(x,a)-{\mathcal{H}}_tQ_t(x,a)\right| \\
&\leq \one_{\{(x,a)=(X_t,~a_t(X_t)) \}}\left( \left|\mathcal{H}_tQ_t(x,a)-Q_t^*(x,a)\right|+c_t \right)
\end{align*}

%
%
Consequently, applying Lemma~\ref{lem:HQstar} we have
\begin{align}\label{formexpectedvalue}
      |\mathbb{E}_{\mathbb{P}_{x_0,\textbf{a}}}[F_t(x,a)|\mathcal{G}_t]| \leq \one_{\{(x,a)=(X_t,~a_t(X_t)) \}} \left(|\mathcal{H}_t Q_t(x,a)- \mathcal{H}_tQ_t^*(x,a)|+c_t\right).
\end{align}
Now using \eqref{formexpectedvalue} and applying Lemma~\ref{lem:mapsq} we can conclude
\begin{equation*}
      \|\mathbb{E}_{\mathbb{P}_{x_0,\textbf{a}}}[F_t(\cdot, \cdot )|\mathcal{G}_t]\|_{\infty} \leq \|\mathcal{H}_tQ_t- \mathcal{H}_tQ_t^*\|_{\infty}+c_t \leq \alpha\|Q_t-Q_t^*\|_{\infty}+c_t = \alpha\|\Delta_t\|_{\infty}+c_t
\end{equation*}
showing  Lemma~\ref{lem_convergence}~(ii).

Next, to show that the assumption from Lemma~\ref{lem_convergence}~(iii) is fulfilled, we observe that for all $(x,a)\in \X\times A$ and $t \in \mathbb{N}$ we have
\begin{equation}\label{eq_Var_ineq}
\begin{aligned}
      \operatorname{Var}&_{\mathbb{P}_{x_0,\textbf{a}}}(F_t(x,a)|\mathcal{G}_t)\\
      & = \one_{\{(x,a)=(X_t,~a_t(X_t)) \}}  \cdot  \operatorname{Var}_{\mathbb{P}^{(k_t^*)}(x,a)}\left(r(x,a,X_{t+1}) + \alpha \max_{b\in A} Q_t(X_{t+1},b)- Q_t^*(x,a)\right)\\
      & \leq  \operatorname{Var}_{\mathbb{P}^{(k_t^*)}(x,a)}\left(-\left(r(x,a,X_{t+1}) + \alpha \max_{b\in A} Q_t(X_{t+1},b)- Q_t^*(x,a)\right)\right)\\
      & = \operatorname{Var}_{\mathbb{P}^{(k_t^*)}(x,a)}\left(-r(x,a,X_{t+1}) - \alpha \max_{b\in A} Q_t(X_{t+1},b)\right)\\
      & = \operatorname{Var}_{\mathbb{P}^{(k_t^*)}(x,a)}\left(-r(x,a,X_{t+1})- \alpha \max_{b\in A} Q_t(X_{t+1},b) + \alpha \min_{y' \in \X} \max_{b' \in A} Q_t^*(y',b')\right).
\end{aligned}
\end{equation}
We recall $C_r := \max_{y_0,y_1 \in \X,\ b\in A} |r(y_0,b,y_1)|<\infty$  and compute as an upper bound for the integrand from \eqref{eq_Var_ineq} that
\begin{align*}
    & -r(x,a,X_{t+1}) - \alpha \max_{b\in A} Q_t(X_{t+1},b) + \alpha \min_{y' \in \X} \max_{b' \in A} Q_t^*(y',b')\\
    &\leq C_r - \alpha \max_{b\in A} Q_t(X_{t+1},b) + \alpha \max_{b' \in A} Q_t^*(X_{t+1},b')\\
    & \leq C_r + \alpha \max_{y\in \X}(\max_{b' \in A} Q_t^*(y,b') - \max_{b\in A} Q_t(y,b))\\
    & \leq C_r + \alpha \max_{y\in \X}\max_{b \in A} |Q_t^*(y,b) - Q_t(y,b)|\\
    & \leq C_r + \alpha \|\Delta_t\|_{\infty} =: M \in \mathbb{R}.
\end{align*}
On the other hand we also may compute a lower bound as follows
\begin{align*}
    & -r(x,a,X_{t+1}) - \alpha \max_{b\in A} Q_t(X_{t+1},b) + \alpha \min_{y' \in \X} \max_{b' \in A} Q_t^*(y',b')\\
    & \geq -C_r - \alpha \max_{b\in A} Q_t(X_{t+1},b) + \alpha \min_{y' \in \X} \max_{b' \in A} Q_t^*(y',b')\\
    & \geq -C_r + \alpha (\min_{y' \in \X} \min_{b \in A} (Q_t^*(y',b) - Q_t(X_{t+1},b)))\\
    & \geq -C_r + \alpha (-\max_{y' \in \X,\ b \in A} |Q_t^*(y',b) - Q_t(X_{t+1},b)|)\\
    & \geq -C_r + \alpha \min_{y\in \X}(-\max_{y' \in \X,\ b \in A} |Q_t^*(y',b) - Q_t(y,b)|)\\
    & \geq -C_r - \alpha(\max_{y', y \in \X,\ b \in A} |Q_t(y,b) - Q_t^*(y,b)| + |Q_t^*(y,b) - Q_t^*(y',b)|)\\
    & \geq -C_r - \alpha \|\Delta_t\|_{\infty} - \alpha  \max_{(y', y) \in \X,\ b \in A}|Q_t^*(y,b) - Q_t^*(y',b)|\\
        & \geq -C_r - \alpha \|\Delta_t\|_{\infty} - \alpha  \frac{2C_r}{1-\alpha}=: m \in \mathbb{R}.
\end{align*}
We apply Popoviciu's inequality on variances from Lemma~\ref{lem_popo} with the bounds $m$ and $M$ computed just above, to \eqref{eq_Var_ineq} and obtain
\begin{align*}
      \operatorname{Var}_{\mathbb{P}_{x_0,\textbf{a}}}&(F_t(x,a)|\mathcal{G}_t) \\
      \leq &\operatorname{Var}_{\mathbb{P}^{(k_t^*)}(x,a)}\left(-r(x,a,X_{t+1}) - \alpha \max_{b\in A} Q_t(X_{t+1},b) + \alpha \min_{y' \in \X} \max_{b' \in A} Q_t^*(y',b')\right)\\
      \leq &\frac14 (M-m)^2\\
      \leq & \frac14 \Big(C_r + \alpha \|\Delta_t\|_{\infty} + C_r + \alpha \|\Delta_t\|_{\infty} + \alpha  \frac{2C_r}{1-\alpha}\Big)^2\\
      = & \frac14 \Big(C_r + C_r + 2 \alpha \|\Delta_t\|_{\infty} + \alpha  \frac{2C_r}{1-\alpha}\Big)^2\\
      \leq & \frac12 \left( \Big(2C_r + \alpha  \frac{2C_r}{1-\alpha}\Big)^2 + 4 \alpha^2 \|\Delta_t\|_{\infty}^2 \right)\\
      \leq & \left( \Big(2C_r + \alpha  \frac{2C_r}{1-\alpha}\Big)^2 + 4 \alpha^2 \|\Delta_t\|_{\infty}^2 \right)\\
      \leq & \Bigg( \Big(2C_r + \alpha  \frac{2C_r}{1-\alpha}\Big)^2 + 4 \alpha^2 \|\Delta_t\|_{\infty}^2+ 4 \alpha^2 + \Big(2C_r + \alpha  \frac{2C_r}{1-\alpha}\Big)^2\|\Delta_t\|_{\infty}^2 \Bigg)\\
      \leq & \left( 4 \alpha^2 + \Big(2C_r + \alpha  \frac{2C_r}{1-\alpha}\Big)^2 \right) (1+\|\Delta_t\|_{\infty}^2)      \leq  C\cdot (1 + \|\Delta_t\|_{\infty})^2,
\end{align*}
with
\begin{equation}
    C: = \left( 4 \alpha^2 + (2C_r + \alpha  \frac{2C_r}{1-\alpha})^2 \right).
\end{equation}
This shows that also Lemma~\ref{lem_convergence}~(iii) is fulfilled, and hence, with an application of Lemma~\ref{lem_convergence} we obtain $Q_t(x,a) \rightarrow Q(x,a) \quad \mathbb{P}_{x_0,\textbf{a}}\text{-almost\ surely as }t \rightarrow \infty$ for all $(x,a) \in \X \times A$. \end{proof}
\begin{proof}[Proof of Theorem~\ref{thm:finite_time_rate_alg1}]
We start by showing the claim of (i).

To this end, we first decompose the difference between the learned $Q$-value function and the optimal $Q$-value function by applying the triangle inequality
\begin{equation}\label{eq:decomposition_q_qt}
\max_{x\in \X, a \in A}|Q_T(x,a)-Q^*(x,a)| = \|Q_T-Q^*\|_{\infty} \leq \|Q_T-Q^*_T\|_{\infty} +\|Q^*_T-Q^*\|_{\infty}.
\end{equation}
This means to find an upper bound for $\max_{x\in \X, a \in A}|Q_T(x,a)-Q^*(x,a)|$, it suffices to bound the two terms on the right-hand side of \eqref{eq:decomposition_q_qt}. Note that by Lemma~\ref{lem:Q_t} we have
\begin{align*}
\|Q^*_T-Q^*\|_{\infty} \leq \frac{\|\mathcal{H}_TQ^*-\mathcal{H}Q^*\|_{\infty}}{1-\alpha}.
\end{align*}
Next, we first apply \eqref{eq:htqminushq}, and second we observe $\|Q^*\|_{\infty} \leq \|V^*\|_{\infty} \leq \sum_{t=0}^\infty \alpha^t C_r = \frac{C_r}{1-\alpha}$, to get
\begin{align*}
\frac{\|\mathcal{H}_TQ^*-\mathcal{H}Q^*\|_{\infty}}{1-\alpha}&\leq \frac{C_r+\alpha\|Q^*\|_{\infty}}{1-\alpha} \cdot \max_{(x,a) \in \X \times A}\max_{k=1,\dots,N}  \left\|\PP_T^{(k)}(x,a)-\PP^{(k)}(x,a)\right\|_1\\
&\leq\frac{C_r+\alpha\frac{C_r}{1-\alpha}}{1-\alpha} \cdot \max_{(x,a) \in \X \times A}\max_{k=1,\dots,N} \left\|\PP_T^{(k)}(x,a)-\PP^{(k)}(x,a)\right\|_1\\
&= \frac{C_r}{(1-\alpha)^2} \cdot \max_{(x,a) \in \X \times A}\max_{k=1,\dots,N} \left\|\PP_T^{(k)}(x,a)-\PP^{(k)}(x,a)\right\|_1.
\end{align*}
Then, note that we can bound the difference between the empirical distribution $\PP_t^{(k)}$ and $\PP^{(k)}$ by Lemma~\ref{lem:concentration}. We obtain for all $T\in \N$ that
\begin{align*}
&\mathbb{P}_{x_0,\textbf{a}}\!\left(\bigcap_{(x,a) \in \X \times A,\atop  k \in \{1,\dots,N\}} \left\{\left\|\PP_T^{(k)}(x,a)-\PP^{(k)}(x,a)\right\|_1< \sqrt{\frac{2\log\left((2^{|\X|}-2)\cdot\left(\frac{N|\X||A|}{\delta}\right)\right)}{N_T(x,a)}}\right\}\!\right)\\
&=1-\mathbb{P}_{x_0,\textbf{a}}\!\left(\bigcup_{(x,a) \in \X \times A,\atop  k \in \{1,\dots,N\}}\left\{\left\|\PP_T^{(k)}(x,a)-\PP^{(k)}(x,a)\right\|_1 \geq  \sqrt{\frac{2\log\left((2^{|\X|}-2)\cdot\left(\frac{N|\X||A|}{\delta}\right)\right)}{N_T(x,a)}}\right\}\!\right)\\
&\geq 1-\sum_{(x,a) \in \X \times A,\atop  k \in \{1,\dots,N\}}\mathbb{P}_{x_0,\textbf{a}}\!\left(\left\|\PP_T^{(k)}(x,a)-\PP^{(k)}(x,a)\right\|_1 \geq  \sqrt{\frac{2\log\left((2^{|\X|}-2)\cdot\left(\frac{N|\X||A|}{\delta}\right)\right)}{N_T(x,a)}}\!\right)\\
&\geq  1-N|\X||A| \cdot \frac{\delta}{N|\X||A|} = 1-\delta
\end{align*}
for $N_T(x,a):=\sum_{s=0}^{T} \one_{\{(X_s,a_s(X_s))=(x,a)\}} $ denoting the number of visits until time $T$ of the state action pair $(x,a)\in \X \times A$.
Thus, with probability at least $1-\delta$ we have
\begin{equation}\label{eq:prob_ineq1}
\begin{aligned}
\|Q^*_T-Q^*\|_{\infty} &\leq \frac{C_r}{(1-\alpha)^2} \cdot  \max_{(x,a) \in \X \times A}\max_{k=1,\dots,N}
 \sqrt{\frac{2\log\left((2^{|\X|}-2)\cdot\left(\frac{N|\X||A|}{\delta}\right)\right)}{N_T(x,a)}}\\
 &= \frac{C_r}{(1-\alpha)^2} \cdot \sqrt{\frac{2\log\left((2^{|\X|}-2)\cdot\left(\frac{N|\X||A|}{\delta}\right)\right)}{\min_{(x,a) \in \X \times A}N_T(x,a)}}.
 \end{aligned}
\end{equation}

Next, we turn to the term $ \|Q_T-Q_T^*\|_{\infty} $ appearing on the right-hand side of \eqref{eq:decomposition_q_qt}. Employing the notations $\Delta_t := Q_t-Q_t^*$  and 
$
\gamma_t(x,a,X_t) := \one_{\left\{(x,a)=\left(X_t,~a_t(X_t)\right) \right\}}\cdot \widetilde{\gamma}_{N_t(x,a)}$
we have by the updating rule of $Q_t$ that for all $(x,a) \in \X \times A$
\begin{equation}\label{eq:delta_hq}
\begin{aligned}
    \Delta_{t+1}(x,a)        
    &=  \Delta_{t}(x,a)  + \gamma_t(x,a,X_t)\cdot \Big(r(x,a,X_{t+1}) + \alpha \max_{b\in A} Q_t(X_{t+1},b) - Q_t(x,a)\Big)\\
     &=  \Delta_{t}(x,a)  + \gamma_t(x,a,X_t)\cdot \Big(r(x,a,X_{t+1}) + \alpha \max_{b\in A} Q_t(X_{t+1},b) - \Delta_t(x,a)-Q_t^*(x,a)\Big)\\
        &=  \left(1-\gamma_t(x,a,X_t)\right)\Delta_{t}(x,a)   \\
        &\hspace{1cm}+ \gamma_t(x,a,X_t)\cdot \Big(r(x,a,X_{t+1}) + \alpha \max_{b\in A} Q_t(X_{t+1},b) -\mathcal{H}_tQ_t^*(x,a)\Big)\\
\end{aligned}
\end{equation}
where we used Lemma~\ref{lem:HQstar} in the last step.
We next define
$$
\varepsilon_{t+1}:= \one_{\left\{(x,a)=\left(X_t,~a_t(X_t)\right) \right\}}\cdot\left(r(x,a,X_{t+1}) + \alpha \max_{b\in A} Q_t(X_{t+1},b)-\mathcal{H}_t Q_t(x,a)\right),
$$
and obtain with \eqref{eq:delta_hq} that
\begin{equation}\label{eq:delta_hq2}
\begin{aligned}
    \Delta_{t+1}(x,a)      &=  \left(1-\gamma_t(x,a,X_t)\right)\Delta_{t}(x,a)   + \gamma_t(x,a,X_t)\cdot \Big(\varepsilon_{t+1}+\mathcal{H}_tQ_t(x,a)-\mathcal{H}_tQ_t^*(x,a)\Big).
    \end{aligned}
\end{equation}
For $(x,a)\in \X\times A$ let $\tau_1(x,a)<\tau_2(x,a)<\dots<\tau_{N_T(x,a)}(x,a)\le T$ denote the visit times of $(x,a)$ up to $T$. 
Then for $j\in \{1,\dots,N_T(x,a)\}$ we have at time $t=\tau_j(x,a)$ that $\gamma_t(x,a,X_t)= \widetilde{\gamma}_{N_t(x,a)}= \widetilde{\gamma}_j$ and hence
\begin{equation}\label{eq:Delta_recursion}
\begin{aligned}
\Delta_{t+1}(x,a)
&=  \left(1-\widetilde{\gamma}_j\right)\Delta_{t}(x,a)   + \widetilde{\gamma}_j\cdot \Big(\varepsilon_{t+1}+\mathcal{H}_tQ_t(x,a)-\mathcal{H}_tQ_t^*(x,a)\Big)\\
&=  \left(1-\widetilde{\gamma}_j\right)\Delta_{t}(x,a)   + \widetilde{\gamma}_j\cdot \xi_{t+1}
\end{aligned}
\end{equation}
for 
\[
\xi_{t+1}:=\varepsilon_{t+1}+\mathcal{H}_tQ_t(x,a)-\mathcal{H}_tQ_t^*(x,a).
\]
Then, 
unrolling \eqref{eq:Delta_recursion} along all $t$, is the same as unrolling  \eqref{eq:Delta_recursion} along the subsequence $(\tau_j(x,a))_j$ as for $t \not\in \left\{\tau_i(x,a),i=1,\dots,N_T(x,a)\right\}$ it holds $\Delta_{t+1}(x,a)=\Delta_t(x,a)$. This gives
\begin{equation*}\label{eq:average_form}
\Delta_{\tau_{N_T(x,a)}}(x,a)
= \left(\prod_{i=1}^{N_T(x,a)-1}\left(1-\widetilde{\gamma}_i\right)\right)\Delta_{0}(x,a) +\sum_{i=1}^{N_T(x,a)-1}\left(\widetilde{\gamma}_i \xi_{\tau_i(x,a)+1} \prod_{j=i+1}^{N_T(x,a)-1}\left(1-\widetilde{\gamma}_j\right)\right).
\end{equation*}
Taking absolutes values and applying Lemma~\ref{lem:mapsq} leads to
\begin{align*}
|\Delta_{\tau_{N_T(x,a)}}(x,a)|
\le &\left(\prod_{i=1}^{N_T(x,a)-1}\left(1-\widetilde{\gamma}_i\right)\right)|\Delta_0(x,a)|\\
&+\bigg|\sum_{i=1}^{N_T(x,a)-1}\widetilde{\gamma}_i \varepsilon_{\tau_i(x,a)+1}\prod_{j=i+1}^{N_T(x,a)-1}\left(1-\widetilde{\gamma}_j\right)\bigg| + \sum_{i=1}^{N_T(x,a)-1}\widetilde{\gamma}_i\alpha\max_{t\le T}\|\Delta_t\|_\infty\prod_{j=i+1}^{N_T(x,a)-1}\left(1-\widetilde{\gamma}_j\right).
\end{align*}
Now take the maximum over $(x,a)\in \X\times A$ and set $B_T:=\max_{t\le T}\|\Delta_t\|_\infty$ to obtain
\begin{equation}\label{eq:BT_ineq}
\begin{aligned}
B_T
\le &\max_{(x,a)\in \X \times A} \bigg\{\left(\prod_{i=1}^{N_T(x,a)-1}\left(1-\widetilde{\gamma}_i\right)\right)\|\Delta_0\|_{\infty} \\
&\hspace{1.5cm}+\bigg|\sum_{i=1}^{N_T(x,a)-1}\widetilde{\gamma}_i \varepsilon_{\tau_i(x,a)+1}\prod_{j=i+1}^{N_T(x,a)-1}\left(1-\widetilde{\gamma}_j\right)\bigg| + \alpha B_T\sum_{i=1}^{N_T(x,a)-1}\widetilde{\gamma}_i\prod_{j=i+1}^{N_T(x,a)-1}\left(1-\widetilde{\gamma}_j\right)\bigg\}\\
\le &\max_{(x,a)\in \X \times A} \bigg\{\left(\prod_{i=1}^{N_T(x,a)-1}\left(1-\widetilde{\gamma}_i\right)\right)\|\Delta_0\|_{\infty}\bigg\} \\
&\hspace{1.5cm}+\max_{(x,a)\in \X \times A} \bigg\{\bigg|\sum_{i=1}^{N_T(x,a)-1}\widetilde{\gamma}_i \varepsilon_{\tau_i(x,a)+1}\prod_{j=i+1}^{N_T(x,a)-1}\left(1-\widetilde{\gamma}_j\right)\bigg|\bigg\} + \alpha B_T,
\end{aligned}
\end{equation}
where we used $\sum_{i=0}^{N_T(x,a)-1}\widetilde{\gamma}_i\prod_{j=i+1}^{N_T(x,a)-1}\left(1-\widetilde{\gamma}_j\right)=1-\prod_{j=1}^{N_T(x,a)-1}\left(1-\widetilde{\gamma}_j\right)\leq 1$. Hence, we get using \eqref{eq:BT_ineq} that

\begin{equation}\label{eq:BT_solved1}
\begin{aligned}
B_T &\le \frac{\prod_{i=1}^{N_T^*-1}\left(1-\widetilde{\gamma}_i\right)\|\Delta_0\|_{\infty} +
\max_{(x,a)\in \X \times A}\bigg|\sum_{i=1}^{N_T(x,a)-1}\widetilde{\gamma}_i \varepsilon_{\tau_i(x,a)+1}\prod_{j=i+1}^{N_T(x,a)-1}\left(1-\widetilde{\gamma}_j\right)\bigg|}{1-\alpha },
\end{aligned}
\end{equation}
where $N_T^*=\min_{(x,a)\in \X \times A}N_T(x,a)$, since $\N \ni n \mapsto \prod_{i=1}^{n-1}\left(1-\widetilde{\gamma}_i\right)$ is decreasing in $n$. Moreover, we have 
$$
\|\Delta_0 \|_{\infty} \leq \|Q_0\|_{\infty} +\|Q^*_0\|_{\infty}  \leq \|Q_0\|_{\infty} +\frac{C_r}{1-\alpha},
$$
and thus \eqref{eq:BT_solved1} writes as
\begin{equation}\label{eq:BT_solved}
\begin{aligned}
B_T &\le \frac{\left(\|Q_0\|_{\infty} +\frac{C_r}{1-\alpha}\right)\prod_{i=1}^{N_T^*-1}\left(1-\widetilde{\gamma}_i\right)+
\max_{(x,a)\in \X \times A}\bigg|\sum_{i=1}^{N_T(x,a)-1}\widetilde{\gamma}_i \varepsilon_{\tau_i(x,a)+1}\prod_{j=i+1}^{N_T(x,a)-1}\left(1-\widetilde{\gamma}_j\right)\bigg|}{1-\alpha }.
\end{aligned}
\end{equation}
Next note that for each fixed $(x,a)\in \X \times A$ and for any sequence $(w_i)_{i \in \N}\subseteq [0,1]$, the process $(M_t)_{t\in \N}$ defined by 
\[
M_t:=\sum_{s=0}^{t-1}w_s \varepsilon_{s+1}.
\]
forms a martingale w.r.t.\,the filtration generated by $(X_t)_{t \in \N}$, denoted by $(\mathcal{F}_t)_{t \in \N}$ whenever $w_t \in \mathcal{F}_t$ for all $t\in \N$. Indeed, we have $M_{t+1}=M_t$ if $(x,a) \neq (X_t,a_t(X_t))$, and in the case  $(x,a) = (X_t,a_t(X_t))$ we compute
   \begin{equation}\label{eq:martingale}
 \begin{aligned}
 \E_{\mathbb{P}_{x_0,\textbf{a}}}\left[M_{t+1}~\middle|~\mathcal{F}_t\right]&=M_t +w_t\left(\E_{\mathbb{P}_{x_0,\textbf{a}}}\left[r(x,a,X_{t+1}) + \alpha \max_{b\in A} Q_t(X_{t+1},b)\middle|~\mathcal{F}_t\right]-\mathcal{H}_t Q_t(x,a)\right)\\
 &=M_t+w_t\left(\inf_{\PP \in \mathcal{P}_t(x,a)}\E_{\PP}\left[r(x,a,X_{t+1}) + \alpha \max_{b\in A} Q_t(X_{t+1},b)\right]-\mathcal{H}_t Q_t(x,a)\right)\\
 &=M_t+w_t\left(\mathcal{H}_t Q_t(x,a)-\mathcal{H}_t Q_t(x,a)\right)=M_t.
 \end{aligned}
 \end{equation} 
Additionally the increments of $M_t$ are bounded:
 \begin{equation}\label{eq:eps_ineq}
 \begin{aligned}
 \left|M_{t+1}-M_t\right|=\left|w_t{\varepsilon}_{t+1}\right| &\leq w_t\left(C_r+ \alpha \max_{x \in \X, b\in A} |Q_t(x,b)|+\max_{x \in \X, a \in A}|\mathcal{H}_t Q_t(x,a)|\right)\\
 &\leq  w_t\left(C_r+ \alpha \frac{C_r}{1-\alpha}+C_r+\alpha \frac{C_r}{1-\alpha}\right)=  w_t \frac{2C_r}{1-\alpha}.
 \end{aligned}
 \end{equation} 
As $(M_t)_{t \in \N}$ is a martingale with bounded increments, by Lemma~\ref{lem:azu} we get for any $u>0$  that
\begin{equation}\label{eq:initialineq}
\mathbb{P}_{x_0,\textbf{a}}\Big(\Big|\sum_{j=1}^{N_T(x,a)}w_j\varepsilon_{\tau_j(x,a)+1}\Big|\ge u\Big)
\le 2\exp\Bigg(-\frac{u^2}{2(2C_r/(1-\alpha))^2\sum_{t=1}^{N_T(x,a)} w_t^2}\Bigg).
\end{equation}
Choose 
\[
u(x,a): = \frac{2C_r}{1-\alpha}\sqrt{2\log\Big(\frac{2|\X||A|}{\delta}\Big)\sum_{t=1}^{N_T(x,a)} w_t^2}.
\]
Then, for each $(x,a)\in \X\times A$, inequality \eqref{eq:initialineq} implies that
\begin{equation}\label{eq:N_t_deltabound0}
\mathbb{P}_{x_0,\textbf{a}}\Big(\Big|\sum_{j=1}^{N_T(x,a)}w_j\varepsilon_{\tau_j(x,a)+1}\Big|
\ge u(x,a) \Big)
\le \frac{\delta}{|\X||A|}.
\end{equation}
We now specify
\[
w_t = \widetilde{\gamma}_t \prod_{j=t+1}^{N_T(x,a)-1} (1-\widetilde{\gamma}_j)\in [0,1], \qquad w_t^{(n)} = \widetilde{\gamma}_t \prod_{j=t+1}^{n-1} (1-\widetilde{\gamma}_j)\in [0,1].
\]
Since the weights $w_t$ depend on the visit count $N_T(x,a)$ which is not contained in $\mathcal{F}_t$ for $t<T$, we condition on the event $\{N_T(x,a)=n\}$. On this event we have $w_t=w_t^{(n)}$, and the weights are deterministic, and the noise sequence remains a martingale difference. Applying Azuma--Hoeffding conditionally and unconditioning yields the stated bound. 
\begin{equation}\label{eq:N_t_deltabound}
\begin{aligned}
&\mathbb{P}_{x_0,\textbf{a}}\Big(\Big|\sum_{j=1}^{N_T(x,a)}w_j\varepsilon_{\tau_j(x,a)+1}\Big|
\ge u(x,a) \Big) \\
&=\sum_{n=0}^T \mathbb{P}_{x_0,\textbf{a}}\Big(\Big|\sum_{j=1}^{n}w_j^{(n)}\varepsilon_{\tau_j(x,a)+1} 
\ge u(x,a) \Big| N_T(x,a)=n \Big) \mathbb{P}_{x_0,\textbf{a}}(N_T(x,a)=n )
\\
&\le \sum_{n=0}^T \frac{\delta}{|\X||A|} \mathbb{P}_{x_0,\textbf{a}}(N_T(x,a)=n )=\frac{\delta}{|\X||A|}.
\end{aligned}
\end{equation}
We now use \eqref{eq:N_t_deltabound} to obtain
\begin{align*}
&\mathbb{P}_{x_0,\textbf{a}}\!\left(\bigcap_{(x,a) \in \X \times A} \left\{\Big|\sum_{j=1}^{N_T(x,a)}w_j\varepsilon_{\tau_j(x,a)+1}\Big|
<
u(x,a)\right\}\!\right)\\
&=1-\mathbb{P}_{x_0,\textbf{a}}\!\left(\bigcup_{(x,a) \in \X \times A} \left\{\Big|\sum_{j=1}^{N_T(x,a)}w_j\varepsilon_{\tau_j(x,a)+1}\Big|
\ge
u(x,a)\right\}\!\right)\\\
&\geq 1-\sum_{(x,a) \in \X \times A}\mathbb{P}_{x_0,\textbf{a}}\!\left( \left\{\Big|\sum_{j=1}^{N_T(x,a)}w_j\varepsilon_{\tau_j(x,a)+1}\Big|
\ge
u(x,a)\right\}\!\right)\\
&\geq  1-|\X||A| \cdot \frac{\delta}{|\X||A|} = 1-\delta.
\end{align*}
This implies that with probability at least $1-\delta$,
\begin{equation}\label{eq:martingale_term_bound}
\begin{aligned}
\max_{(x,a)\in \X \times A}\Big|\sum_{j=1}^{N_T(x,a)}w_j\varepsilon_{\tau_j(x,a)+1}\Big|
&\le \max_{(x,a)\in \X \times A}
\frac{2C_r}{1-\alpha}\sqrt{2\log\Big(\frac{2|\X||A|}{\delta}\Big)\sum_{t=1}^{N_T(x,a)} w_t^2}\\
&=\frac{2C_r}{1-\alpha}\sqrt{2\log\Big(\frac{2|\X||A|}{\delta}\Big)\max_{(x,a)\in \X \times A}\sum_{t=1}^{N_T(x,a)} w_t^2}.
\end{aligned}
\end{equation}
Now combining \eqref{eq:BT_solved} and \eqref{eq:martingale_term_bound} gives with probability at least $1-\delta$ that
\begin{equation}\label{eq:QT_QTstar_bound}
\begin{aligned}
&\|Q_T-Q_T^*\|_\infty\\
&= \|\Delta_T\|_\infty \leq
\frac{\left(\|Q_0\|_{\infty} +\frac{C_r}{1-\alpha}\right)\prod_{i=1}^{N_T^*-1}\left(1-\widetilde{\gamma}_i\right)}{{1-\alpha }}+
\frac{2C_r}{(1-\alpha)^2}\sqrt{2\log\Big(\frac{2|\X||A|}{\delta}\Big)\max_{(x,a)\in \X \times A}\sum_{t=1}^{N_T(x,a)} w_t^2}\\
&=\frac{\left(\|Q_0\|_{\infty} +\frac{C_r}{1-\alpha}\right)\prod_{i=1}^{N_T^*-1}\left(1-\widetilde{\gamma}_i\right)}{{1-\alpha }}+
\frac{2C_r}{(1-\alpha)^2}\sqrt{2\log\Big(\frac{2|\X||A|}{\delta}\Big)\sum_{t=1}^{N_T^*} \left(\widetilde{\gamma}_t \prod_{j=t+1}^{N_T^*-1} (1-\widetilde{\gamma}_j)\right)^2}.
\end{aligned}
\end{equation}
Eventually, we show the assertion of (ii), where with the choice $\widetilde{\gamma}_t = \frac{1}{t+1}$ we have $$\prod_{i=1}^{N_T^*-1}\left(1-\widetilde{\gamma}_i\right)=\prod_{i=1}^{N_T^*-1}\frac{i}{i+1}=\frac{1}{N_T^*}$$ as well as 
$$
\sum_{t=1}^{N_T^*} w_t^2=\sum_{t=1}^{N_T^*} \left(\frac{1}{t+1}\prod_{j=t+1}^{N_T^*-1} \frac{j}{j+1}\right)^2=\frac{N_T^*-1}{(N_T^*)^2}+\frac{1}{(N_T^*+1)^2} \leq \frac{1}{N_T^*}
$$
and thus \eqref{eq:QT_QTstar_bound} simplifies to
\begin{equation}\label{eq:QT_QTstar_bound2}
\begin{aligned}
\|Q_T-Q_T^*\|_\infty &\leq \eqref{eq:prob_ineq1} + 
\frac{\left(\|Q_0\|_{\infty} +\frac{C_r}{1-\alpha}\right)}{N_T^*(1-\alpha )}+
\frac{2C_r}{(1-\alpha)^2}\sqrt{2\log\Big(\frac{2|\X||A|}{\delta}\Big)\sum_{t=1}^{N_T^*(x,a)} w_t^2}\\
&\leq \eqref{eq:prob_ineq1} + 
\frac{\left(\|Q_0\|_{\infty} +\frac{C_r}{1-\alpha}\right)}{N_T^*(1-\alpha )}+
\frac{2C_r}{(1-\alpha)^2}\sqrt{\frac{2\log\Big(\frac{2|\X||A|}{\delta}\Big)}{N_T^*}}.
\end{aligned}
\end{equation}
\end{proof}

{
\begin{proof}[Proof of Lemma~\ref{lem:Q_convergence}]
Let $(x,a) \in \X \times A$, then, since Assumption~\ref{asu_p} holds, by \cite[Theorem 2.7~(i)]{neufeld2023markov}, there exists some minimizing measure $\PP^* \in \mathcal{P}(x,a)$ such that 
\[
\inf_{\mathbb{P}\in \mathcal{P}(x,a)} \mathbb{E}_{\mathbb{P}}\left[ r(x,a,X_1) + \alpha V(X_1)\right] = \mathbb{E}_{\mathbb{P}^*}\left[ r(x,a,X_1) + \alpha V(X_1)\right].
\]
Take some sequence $(\PP^{(n)})_{n \in \N}$ with $\PP^{(n)} \in \mathcal{P}^{(n)}(x,a)$ for all $n \in \N$ such that $\PP^{(n)} \rightarrow \PP^*$ weakly as $n \rightarrow \infty$ and such that $\int_\X |x| \D \PP^{(n)}(x) \rightarrow \int_\X |x| \D \PP ( x)$. Then, since $\X$ is finite, it follows directly that 
\begin{align*}
\inf_{\mathbb{P}\in \mathcal{P}(x,a)} \mathbb{E}_{\mathbb{P}}\left[ r(x,a,X_1) + \alpha V(X_1)\right] &= \lim_{n \rightarrow \infty} \mathbb{E}_{\mathbb{P}^{(n)}}\left[ r(x,a,X_1) + \alpha V(X_1)\right] \\
&\geq \lim_{n \rightarrow \infty} \inf_{\mathbb{P}\in \mathcal{P}^{(n)}(x,a)} \mathbb{E}_{\mathbb{P}}\left[ r(x,a,X_1) + \alpha V(X_1)\right].
\end{align*}
But note that, since $\mathcal{P}^{(n)}(x,a)\subset \mathcal{P}(x,a)$, we also have 
\[
\inf_{\mathbb{P}\in \mathcal{P}(x,a)} \mathbb{E}_{\mathbb{P}}\left[ r(x,a,X_1) + \alpha V(X_1)\right] \leq \inf_{\mathbb{P}\in \mathcal{P}^{(n)} (x,a)} \mathbb{E}_{\mathbb{P}}\left[ r(x,a,X_1) + \alpha V(X_1)\right]
\]
showing that
\begin{equation}\label{eq:convergence_V_n}
\begin{aligned}
\inf_{\mathbb{P}\in \mathcal{P}(x,a)} \mathbb{E}_{\mathbb{P}}\left[ r(x,a,X_1) + \alpha V(X_1)\right]&=\lim_{n \rightarrow \infty} \inf_{\mathbb{P}\in \mathcal{P}^{(n)}(x,a)} \mathbb{E}_{\mathbb{P}}\left[ r(x,a,X_1) + \alpha V(X_1)\right] \\
&= \lim_{n \rightarrow \infty} \mathbb{E}_{\mathbb{P}^{(n)}}\left[ r(x,a,X_1) + \alpha V(X_1)\right].
\end{aligned}
\end{equation}
Let $V_{\varepsilon_n}(x)$ denote the value function  defined as in \eqref{eq:valuefunc} associated to the ambiguity set $\mathcal{P}_{\varepsilon_n}(x,a):= \{ \PP \in \mathcal{M}_1(\X)~|~ W_1(\PP, \PP^*) \leq \varepsilon_n\}$ where $\varepsilon_n = W_1(\PP^{(n)},\PP^*)$, for $W_1$ denoting the $1$-Wasserstein-distance defined by
\[
W_1(\PP^{(n)},\PP^*):=\inf_{\pi \in \Pi(\PP^{(n)},\PP^*)}\int_{\X \times \X} \|x-y\| \D \pi(x,y),
\]
where $\|\cdot\|$ denotes the Euclidean norm on $\R^d$, and where $\Pi(\PP^{(n)},\PP^*)$ denotes the set of joint distributions of two probability measures $\PP^{(n)}$ and $\PP^*$, compare also, e.g. \cite{villani2008optimal}.
 Then, \cite[Theorem 3.1]{neufeld2023bounding} implies for all $x\in \X$ that
\begin{equation}\label{eq:eps_n}
|V^*(x)-V^{(n)}(x)| \leq |V^*(x)-V_{\varepsilon_n}(x)|+|V_{\varepsilon_n}(x)-V^{(n)}(x)| \leq C \varepsilon_n
\end{equation}
for some constant $C$ independent of $x \in \X$, since both $\PP^*$ and $\PP^{(n)}$ are by definition inside a Wasserstein-ball centered at $\PP^*$ with radius $\varepsilon_n$. Hence, we get together with \eqref{eq:convergence_V_n} 
\begin{align*}
&\lim_{n \rightarrow \infty} \inf_{\mathbb{P}\in \mathcal{P}^{(n)}(x,a)} \mathbb{E}_{\mathbb{P}}\left[ r(x,a,X_1) + \alpha V^{(n)}(X_1)\right] \\
&= \lim_{n \rightarrow \infty}  \mathbb{E}_{\mathbb{P}^{(n)}}\left[ r(x,a,X_1) + \alpha V(X_1)\right] +\mathbb{E}_{\mathbb{P}^{(n)}}\left[ \alpha \left(V^{(n)}(X_1)-V(X_1)\right)\right] \\
&= \inf_{\mathbb{P}\in \mathcal{P}(x,a)} \mathbb{E}_{\mathbb{P}}\left[ r(x,a,X_1) + \alpha V(X_1)\right]
\end{align*}
by \eqref{eq:eps_n} since $\varepsilon_n= W_1(\PP^{(n)},\PP^*) \rightarrow 0$ as $n \rightarrow \infty$ which follows from \cite[Definition 6.8~(i)]{villani2008optimal} and by the assumption that $\PP^{(n)} \rightarrow \PP$ weakly as well as $\int_\X |x| \D \PP^{(n)}(x) \rightarrow \int_\X |x| \D \PP ( x)$.
\end{proof}

\begin{proof}[Proof of Lemma~\ref{lem:q_wasserstein}]
Let $\varepsilon>0, q \in \N$, $(x,a) \in \X \times A$, and let $\PP \in \mathcal{P}^{(q,\varepsilon)}(x,a) $. 
\begin{itemize}
\item[(i)]

First, we show that for all $p \in \N$ and for all $n \in \N$ there exists some $\PP^{(n)} \in G^{(n)}$ with 
\begin{equation}\label{eq:proof_C_n1}
W_p(\PP^{(n)}, \PP) \leq \frac{C}{n^{1/p}},
\end{equation}
where $C:=\max_{y,y' \in \X} \|y-y'\| \cdot |\X|^{1/p} $ depends on $p$ but not on $n$.

To this end, let $n \in \N$ and define for all $y\in \X$ the quantity $k_y':=\lfloor n \cdot \PP(y) \rfloor \in \{0,1,\dots,n\}$, and select an arbitrary subset $\X'\subset \X$ with $|\X'|=n - \sum_{y \in \X} k_y'$. Based on this, we set for all $y \in \X$
\[
k_y:= \begin{cases} k_y'+1 &\text{ if } y \in \X' \\
k_y' &\text{ if } y \in \X \backslash \X'
\end{cases}
\]
Then, by definition, 
$$\sum_{y \in \X} k_y =\sum_{y \in \X'} (k_y'+1) +\sum_{y \in \X \backslash \X'} k_y'  = \sum_{y \in \X} k_y' +\sum_{y \in \X'} 1  = \sum_{y \in \X} k_y'+n - \sum_{y \in \X} k_y'= n $$ and $k_y \in \{0,1,\dots,n\}$. We continue to define a probability measure $\PP^{(n)}\in G^{(n)}$ through 
\[
\PP^{(n)}(y): = \frac{k_y}{n} 
\]
Note that $\PP^{(n)}(y)\in \left\{\frac{\lfloor n \cdot \PP(y) \rfloor}{n}, \frac{\lceil n \cdot \PP(y) \rceil}{n}\right\}$, and hence 
\begin{equation}\label{eq:proof_1_n_ineq}
\left|\PP^{(n)}(y)-\PP(y)\right| \leq \frac{1}{n}.
\end{equation}
Without loss of generality assume $\PP^{(n)} \neq \PP$. Then, define the joint distribution $\pi \in {\Pi}(\PP^{(n)},\PP)$ via
\[
\pi(y,y') := \begin{cases}
\min \{\PP(y), \PP^{(n)}(y)\} &\text{ if } y=y',\\
\frac{1}{T}\max \{\PP(y)-\PP^{(n)}(y),0\} \cdot\max \{\PP^{(n)}(y')-\PP(y'),0\} &\text{ if } y\neq y'.
\end{cases}
\]
for $T:= \sum_{y \in \X} \max \{\PP^{(n)}(y)-\PP(y),0\}$.
With this definition and by using \eqref{eq:proof_1_n_ineq}, we have for all $p \in \N$
\begin{align*}
W_p(\PP, \PP^{(n)}) &=\left(\min_{\widetilde{\pi} \in {\Pi}(\PP,\PP^{(n)})} 
      \sum_{x,y \in \mathcal{X}} \|y-y'\|^p\, \widetilde{\pi}(y,y')   \right)^{1/p}\\
   &\leq \left(\sum_{y,y' \in \X} \|y-y'\|^p \pi(y,y')\right)^{1/p} \\
&= \left(\sum_{y,y' \in \X, y \neq y'} \|y-y'\|^p \pi(y,y')\right)^{1/p} \\
&\leq  \max_{y,y' \in \X} \|y-y'\|\cdot  \left(\sum_{y,y' \in \X, y \neq y'}  \pi(y,y')\right)^{1/p}\\
&\leq  \max_{y,y' \in \X} \|y-y'\|\cdot  \left(\sum_{y\in \X}  |\PP^{(n)}(y)-\PP(y)|\right)^{1/p} \leq  \max_{y,y' \in \X} \|y-y'\| \cdot |\X|^{1/p} \cdot  \frac{1}{n^{1/p}}
\end{align*}
where $|\X|$ denotes the number of states. 
\item[(ii)]
Let $n \in \N$. We define, given some $q: \X \times A \rightarrow \R$, the operator
\begin{equation}\label{eq:opH^n}
\X\times A \ni (x,a) \mapsto \mathcal{H}^{(n)}q(x,a) := \min_{\mathbb{P}\in\mathcal{P}^{(n)}(x,a)} \mathbb{E}_{\mathbb{P}}\left[r(x,a,X_1) + \alpha \max_{b\in A}q(X_1,b)\right],
\end{equation}
and observe that by Lemma~\ref{lem:HQstar} (which is applicable as $\mathcal{P}^{(n)}$ is a finite ambiguity set, and  by Lemma~\ref{lem:mapsq}
\begin{align*}
\|Q^{(n)^*}-Q^*\|_{\infty}&= \|\mathcal{H}^{(n)}Q^{(n)^*}-\mathcal{H} Q^*\|_{\infty} \\
&\leq  \|\mathcal{H}^{(n)}Q^{(n)^*}-\mathcal{H}^{(n)}Q^{*}+\mathcal{H}^{(n)}Q^{*}-\mathcal{H} Q^*\|_{\infty}\\
&\leq  \alpha \|Q^{(n)^*}-Q^{*}\|_{\infty}+\|\mathcal{H}^{(n)}Q^{*}-\mathcal{H} Q^*\|_{\infty}
\end{align*}
and thus 
\begin{equation}\label{eq:recursion_hnq}
\|Q^{(n)^*}-Q^*\|_{\infty} \leq \frac{\|\mathcal{H}^{(n)}Q^{*}-\mathcal{H} Q^*\|_{\infty}}{1-\alpha}.
\end{equation}
Moreover for all $(x,a) \in \X \times A$ we have
\begin{equation}\label{eq:hq_equality_1}
\begin{aligned}
& \mathcal{H}^{(n)}Q^{*}(x,a)-\mathcal{H} Q^*(x,a)\\
&\min_{\mathbb{P}\in\mathcal{P}^{(n)}(x,a)} \mathbb{E}_{\mathbb{P}}\left[r(x,a,X_1) + \alpha \max_{b\in A}Q^{*}(X_1,b)\right]-\min_{\mathbb{P}\in\mathcal{P}(x,a)} \mathbb{E}_{\mathbb{P}}\left[r(x,a,X_1) + \alpha \max_{b\in A}Q^{*}(X_1,b)\right]
\end{aligned}
\end{equation}
We first select a minimizing $\underline{\PP}\in\mathcal{P}(x,a)$, i.e., 
\[
\min_{\mathbb{P}\in\mathcal{P}(x,a)} \mathbb{E}_{\mathbb{P}}\left[r(x,a,X_1) + \alpha \max_{b\in A}Q^{*}(X_1,b)\right]= \mathbb{E}_{\underline{\PP}}\left[r(x,a,X_1) + \alpha \max_{b\in A}Q^{*}(X_1,b)\right]
\]
 whose existence is ensured by Assumption~\ref{asu_p} (see \cite[Theorem 2.7~(i) \& Proposition 3.1]{neufeld2023markov}), and then, we obtain by Lemma~\ref{lem:q_wasserstein}~(i) some $\underline{\PP}^{(n)} \in G^{(n)}$ with $W_q(\underline{\PP}^{(n)}, \underline{\PP}) \leq \frac{\max_{y,y' \in \X} \|y-y'\| \cdot |\X|^{1/q} }{n^{1/q}}$.
Then in case that $W_q(\underline{\PP}, \widehat{\PP}(x,a))< \varepsilon$ we have
\[
W_q(\underline{\PP}^{(n)}, \widehat{\PP}(x,a))\leq W_q(\underline{\PP}^{(n)},\underline{\PP})+W_q(\underline{\PP}, \widehat{\PP}(x,a)) < \varepsilon
\]
for $n$ large enough, and thus $\underline{\PP}^{(n)} \in \mathcal{P}^{(n)}(x,a)$ for $n$ large enough.

If $\underline{\PP}$ lies on the boundary of the Wasserstein-ball first consider  the convex combination $$\Q^{(n)}:=(1-1/n^{1/q}) \underline{\PP}+1/n^{1/q} \widehat{\PP}(x,a)$$ which implies
\[
W_q(\Q^{(n)},\widehat{\PP}(x,a)) \leq (1-\frac{1}{n^{1/q}})W_q(\PP, \widehat{\PP}(x,a))=(1-\frac{1}{n^{1/q}})\varepsilon < \varepsilon
\]
Hence $\Q^{(n)}$ is in the Wasserstein-ball, and as in the first case we obtain obtain by Lemma~\ref{lem:q_wasserstein}~(i) some $\underline{\PP}^{(n)} \in G^{(n)}$ with $W_q(\underline{\PP}^{(n)}, \Q^{(n)}) \leq \frac{\max_{y,y' \in \X} \|y-y'\| \cdot |\X|^{1/q} }{n^{1/q}}$ lying in the Wasserstein-ball for $n$ large enough, implying
\begin{equation}\label{eq_wq_max}
W_q(\underline{\PP}, \underline{\PP}^{(n)})\leq W_q(\underline{\PP}, \Q^{(n)})+ W_q(\Q^{(n)}, \underline{\PP}^{(n)})\leq \frac{1}{n^{1/q}}\varepsilon+\frac{\max_{y,y' \in \X} \|y-y'\| \cdot |\X|^{1/q} }{n^{1/q}}.
\end{equation}
which holds true in both cases. Together with \eqref{eq:hq_equality_1}, we obtain
\begin{align*}
& \mathcal{H}^{(n)}Q^{*}(x,a)-\mathcal{H} Q^*(x,a)\\
& \leq (L_r+\alpha L_V) \left\{\mathbb{E}_{\underline{\mathbb{P}}^{(n)}}\left[\frac{r(x,a,X_1) + \alpha V^{*}(X_1)}{(L_r+\alpha L_V)}\right]-\mathbb{E}_{\underline{\mathbb{P}}}\left[\frac{r(x,a,X_1) + \alpha V^{*}(X_1)}{(L_r+\alpha L_V)}\right] \right\}\\
& \leq (L_r+\alpha L_V) \sup_{ f: \X \rightarrow \R, 1-\text{Lipschitz} }\left\{\mathbb{E}_{\underline{\mathbb{P}}^{(n)}}\left[f(X_1)\right]-\mathbb{E}_{\underline{\mathbb{P}}}\left[f(X_1)\right] \right\}
=(L_r+\alpha L_V) W_1(\underline{\mathbb{P}},\underline{\mathbb{P}}^{(n)})
\end{align*}
from the Kantorovich--Rubinstein duality. Moreover, $\mathcal{H} Q^*(x,a)-\mathcal{H}^{(n)}Q^{*}(x,a) \leq 0$ implying 
\begin{equation}\label{eq:lasteqofproof1}
\left|\mathcal{H}^{(n)}Q^{*}(x,a)-\mathcal{H} Q^*(x,a)\right| \leq (L_r+\alpha L_V)W_1(\underline{\mathbb{P}},\underline{\mathbb{P}}^{(n)}).
\end{equation} 

 By using $W_1 \leq W_q$ (see \cite[Remark 6]{villani2008optimal}), and by \eqref{eq:recursion_hnq}, \eqref{eq_wq_max}, and \eqref{eq:lasteqofproof1} we then obtain
\[
\|Q^{(n)^*}-Q^*\|_{\infty} \leq \frac{(L_r+\alpha L_V) W_1(\underline{\mathbb{P}},\underline{\mathbb{P}}^{(n)})}{1-\alpha} \leq \frac{(L_r+\alpha L_V) (\varepsilon+\max_{y,y' \in \X} \|y-y'\| \cdot |\X|^{1/q} )}{(1-\alpha)n^{1/q}}.
\]
\end{itemize}
\end{proof}

\begin{proof}[Proof of Lemma~\ref{lem:q_parametric}]
Let $(x,a) \in \X \times A$. Let $n \in \N$ and consider the open cover of $\Theta(x,a)$.
$$\left\{B_{1/n}(\theta)~|~\theta \in \Theta(x,a)\right\} \supseteq \Theta(x,a),$$
where $B_{1/n}(\theta)$ denotes (open) balls centered at $\theta$ with radius $1/n$.
By the compactness of $\Theta(x,a)$, there exists a finite subcover, i.e., there exist $\theta_1^{(n)}, \dots, \theta_{m_n}^{(n)}\in \Theta(x,a)$ such that $\Theta(x,a) \subset \cup_{k=1}^{m_n} B_{1/n}(\theta_k)$. Then, we define
\[
\Theta^{(n)}(x,a):= \{\theta_1^{(n)}, \dots, \theta_{m_n}^{(n)}\} \subseteq \Theta(x,a)
\]
and $\mathcal{P}^{(n)}(x,a):= \mathcal{P}^{\Theta^{(n)}}(x,a)$. To show that Lemma~\ref{lem:Q_convergence} applies, we first note that Assumption~\ref{asu_p} is implied by the assumed continuity of  $\theta \mapsto \widehat{\PP}(x,a, \theta)$, see \cite[Proposition 3.2]{neufeld2023markov}.  We pick some $\PP \in \mathcal{P}(x,a)$ which, by definition of $\mathcal{P}$, can be represented as $\PP = \widehat{\PP}(x,a, \theta)$ for some $\theta \in \Theta(x,a)$. By construction of $\Theta^{(n)}$, there exists some $\theta^{(n)} \in \Theta^{(n)}(x,a)$ with $d(\theta, \theta^{(n)}) \leq  1/n$, and the assertion of  Lemma~\ref{lem:Q_convergence} follows by defining $\PP^{(n)}=\widehat{\PP}(x,a, \theta^{(n)})$ and since $\widehat{\PP}(x,a, \theta^{(n)}) \rightarrow \widehat{\PP}(x,a, \theta)=\PP$ in the Wasserstein-1 topology as $n \rightarrow \infty$ with the assumed continuity of  $\theta \mapsto \widehat{\PP}(x,a, \theta)$.
\end{proof}

\begin{proof}[Proof of Example~\ref{exa:binomial_approximation}]
We first show that \cite[Assumption 2.2]{neufeld2023markov} is fulfilled for the weak topology (i.e.,  $p=0$ in the notation of  \cite{neufeld2023markov}). This means we want to show that 
\[
\X \times A \ni (x,a) \twoheadrightarrow \mathcal{P}(x,a):= \{ \operatorname{Bin}(N,p), p \in [\underline{p}(x), \overline{p}(x)]\}
\]
is non-empty, compact-valued, and continuous.

The non-emptiness follows from requiring $\underline{p}(\cdot) \leq \overline{p}(\cdot)$. 

For the compactness let $(\mathbb{P}^{(n)})_{n \in \N} \subseteq \mathcal{P}(x,a)$, then we have for all $n \in \N$ the representation $\PP^{(n)} = \operatorname{Bin}(N,p^{(n)})$ for some $p^{(n)} \in [\underline{p}(x), \overline{p}(x)]$. Hence, by the Bolzano--Weierstrass theorem there exists a convergent subsequence $p^{(n_k)} \rightarrow p \in  [\underline{p}(x), \overline{p}(x)]$ as $k \rightarrow \infty$ which also implies $\PP^{(n_k)} \rightarrow \PP = \operatorname{Bin}(N,p)$ weakly. 

For the continuity we use the characterization of upper and lower hemicontinuity provided in \cite[Theorem 17.20 and Theorem 17.21]{Aliprantis}. For the upper hemicontinuity take a sequence $(x^{(n)},a^{(n)}) \subseteq \X \times A$ with $(x^{(n)},a^{(n)}) \rightarrow (x,a) \in \X \times A$ as $n \rightarrow \infty$, and consider a sequence $(\PP^{(n)})_{n \in \N}$ with $\PP^{(n)} \in \mathcal{P}(x^{(n)},a^{(n)})$ for all $n \in \N$. Then, we can represent $\PP^{(n)} = \operatorname{Bin}(N,p^{(n)})$ for some $p^{(n)} \in [\underline{p}(x^{(n)}), \overline{p}(x^{(n)})]$ and for $n$ large enough $x^{(n)} = x$, hence $p^{(n)} \in  [\underline{p}(x), \overline{p}(x)]$, i.e., $\PP^{(n)} \in \mathcal{P}(x,a)$ showing the upper hemicontinuity. 
To show the lower hemicontinuity, take a sequence $(x^{(n)},a^{(n)}) \subseteq \X \times A$ with $(x^{(n)},a^{(n)}) \rightarrow (x,a) \in \X \times A$ and let $\PP \in \mathcal{P}(x,a)$, then $\PP = \operatorname{Bin}(N,p)$ for some $p = \lambda \underline{p}(x) + (1-\lambda) \overline{p}(x)$ with $\lambda \in [0,1]$. Define $\PP^{(n)} = \operatorname{Bin}(N,p^{(n)})$ for $p^{(n)} = \lambda \underline{p}(x^{(n)}) + (1-\lambda) \overline{p}(x^{(n)})$ implying $\PP^{(n)} \in \mathcal{P}(x^{(n)},a^{(n)})$. Then, by definition $p^{(n)} \rightarrow p$ and hence $\operatorname{Bin}(N,p^{(n)})\rightarrow \operatorname{Bin}(N,p)$ as $n \rightarrow \infty$ weakly, showing the lower hemicontinuity.

Next, we show that $\mathcal{P}^{(n)}(x,a)$ fulfils the assumptions of Lemma~\ref{lem:Q_convergence}. To this end, let $(x,a) \in \X \times A$ and $\PP \in \mathcal{P}(x,a)$, i.e.,  $\PP = \operatorname{Bin}(N,p)$ for some $p = \lambda \underline{p}(x) + (1-\lambda) \overline{p}(x)$ with $\lambda \in [0,1]$. Define $\PP^{(n)} \in \mathcal{P}^{(n)}(x,a)$ by $\PP^{(n)}:= \operatorname{Bin}(N,p_i)$ where $i \in \{1,\dots,n\}$ such that $| \frac{i}{n}- \lambda|$ is minimal, implying that $p_i \rightarrow p$ as $n \rightarrow \infty$. Then, we have for all continuous and bounded functions $f: \X \rightarrow \R$ that 
\begin{align*}
\int_\X f(x) \D \PP^{(n)}(x)&= \sum_{k=0}^N   {N \choose k}p_i^k(1-p_i)^k f(k) \rightarrow \sum_{k=0}^N   {N \choose k}p^k(1-p)^k f(k)  =\int_\X f(x) \D \PP(x)
\end{align*}
as $n \rightarrow \infty$, i.e., $\PP^{(n)} \rightarrow \PP$ weakly. Analogously, we obtain $\int_\X |x| \D \PP^{(n)}(x) \rightarrow \int_\X |x| \D \PP ( x)$.
\end{proof}
}
\begin{proof}[Proof of Corollary~\ref{cor:parametric}]

 We define 
\begin{equation}\label{eq:opH^n2}
\X\times A \ni (x,a) \mapsto \mathcal{H}^{(n)}q(x,a) := \min_{\mathbb{P}\in\mathcal{P}^{(n)}(x,a)} \mathbb{E}_{\mathbb{P}}\left[r(x,a,X_1) + \alpha \max_{b\in A}q(X_1,b)\right],
\end{equation}
and observe that by Lemma~\ref{lem:HQstar} and Lemma~\ref{lem:mapsq}
\begin{align*}
\|Q^{(n)^*}-Q^*\|_{\infty}&= \|\mathcal{H}^{(n)}Q^{(n)^*}-\mathcal{H} Q^*\|_{\infty} \\
&\leq  \|\mathcal{H}^{(n)}Q^{(n)^*}-\mathcal{H}^{(n)}Q^{*}+\mathcal{H}^{(n)}Q^{*}-\mathcal{H} Q^*\|_{\infty}\\
&\leq  \alpha \|Q^{(n)^*}-Q^{*}\|_{\infty}+\|\mathcal{H}^{(n)}Q^{*}-\mathcal{H} Q^*\|_{\infty}
\end{align*}
and thus 
\begin{equation}\label{eq:recursion_hnq2}
\|Q^{(n)^*}-Q^*\|_{\infty} \leq \frac{\|\mathcal{H}^{(n)}Q^{*}-\mathcal{H} Q^*\|_{\infty}}{1-\alpha}.
\end{equation}
Let $(x,a)\in \X \times A$. For any $\theta\in\Theta(x,a)$ choose $\theta^{(n)}\in\Theta^{(n)}(x,a)$ with
$\|\theta-\theta^{(n)}\|\le\varepsilon_n$ where $\varepsilon_n$ is defined in \eqref{eq:covering-radius}. By \eqref{eq:W1-Lipschitz-param} we get
$W_1(\widehat{\PP}(x,a,\theta),\widehat{\PP}(x,a,\theta^{(n)}))\le L_\theta\varepsilon_n$.
Using the Kantorovich--Rubinstein duality and the Lipschitz bounds for $y\mapsto r(x,a,y)$ and
$y\mapsto V^\ast(y)$ yields
\begin{align*}
& \mathcal{H}^{(n)}Q^{*}(x,a)-\mathcal{H} Q^*(x,a)\\
& \leq (L_r+\alpha L_V) \left\{\mathbb{E}_{\widehat{\PP}(x,a,\theta^{(n)})}\left[\frac{r(x,a,X_1) + \alpha V(X_1)}{(L_r+\alpha L_V)}\right]-\mathbb{E}_{\widehat{\PP}(x,a,\theta)}\left[\frac{r(x,a,X_1) + \alpha V(X_1)}{(L_r+\alpha L_V)}\right] \right\}\\
& \leq (L_r+\alpha L_V) \sup_{ f: \X \rightarrow \R, 1-\text{Lipschitz} }\left\{\mathbb{E}_{\widehat{\PP}(x,a,\theta^{(n)})}\left[f(X_1)\right]-\mathbb{E}_{\widehat{\PP}(x,a,\theta)}\left[f(X_1)\right] \right\}\\
&=(L_r+\alpha L_V) W_1\left(\widehat{\PP}(x,a,\theta^{(n)}),\widehat{\PP}(x,a,\theta)\right)\\
&\leq (L_r+\alpha L_V) L_{\theta} \varepsilon_{n}
\end{align*}
where we have chosen $\theta$ as the minimizing parameter of $\mathcal{H} Q^*(x,a)$  whose existence is ensured by  the assumed continuity of  $\theta \mapsto \widehat{\PP}(x,a, \theta)$ (see \cite[Theorem 2.7~(i) \& Proposition 3.2]{neufeld2023markov}). Together with \eqref{eq:recursion_hnq2} we get the assertion.
\end{proof}

\subsection*{Acknowledgements}
J.S. gratefully acknowledges financial support by the NUS Start-Up Grant \emph{Tackling model uncertainty in Finance with machine learning}.
\bibliographystyle{plain} 
\bibliography{literature}

\end{document}